\newtheorem{lem}{Lemma}[section]
\newtheorem{thm}{Theorem}[section]
\newtheorem{cor}{Corollary}[section]
\theoremstyle{definition}
\begin{document}
\title{Ordering   $Q$-indices  of graphs:  given size and girth \footnote{Supported by National Natural Science Foundation of China (Nos. 12061074, 11971274) and the China Postdoctoral Science Foundation (No. 2019M661398).}}
\author{
{\small Yarong Hu$^{a,c}$, \  \ Zhenzhen Lou$^{b,}$\footnote{Corresponding author.
\it Email addresses:  xjdxlzz@163.com (Z. Lou).},\ \ Qiongxiang Huang$^{a}$}\\[2mm]
\footnotesize $^a$ College of Mathematics and System Science,
Xinjiang University, Urumqi 830046, China \\
\footnotesize $^b$ School of Mathematics, East China University of Science and Technology,
 Shanghai, 200237, China\\
\footnotesize $^c$ School of Mathematics and Information Technology, Yuncheng University, Yuncheng 044000, China}
\date{}
\maketitle {\flushleft\large\bf Abstract:}
The signless Laplacian matrix in graph spectra theory is a remarkable matrix of graphs,
and it is extensively studied by researchers.
In 1981,  Cvetkovi\'{c} pointed $12$  directions in further investigations of graph spectra,
one of which is ``classifying and ordering graphs''.
Along with this classic direction,
we pay our attention on the order of  the largest eigenvalue of the signless Laplacian matrix of graphs, which is usually  called the $Q$-index of a graph. Let $\mathbb{G}(m, g)$ (resp. $\mathbb{G}(m, \geq g)$) be the family of connected graphs on $m$ edges
with girth $g$ (resp. no less than $g$), where $g\ge3$.
In this paper, we  firstly order  the first $(\lfloor\frac{g}{2}\rfloor+2)$ largest  $Q$-indices of graphs in $\mathbb{G}(m, g)$, where  $m\ge 3g\ge 12$.
Secondly, we  order the first $(\lfloor\frac{g}{2}\rfloor+3)$ largest $Q$-indices of graphs in $\mathbb{G}(m, \geq g)$, where $m\ge 3g\ge 12$. As a complement, we   give
the first five largest $Q$-indices of graphs in $\mathbb{G}(m, 3)$ with $m\ge 9$.
Finally, we give the order of the first eleven largest $Q$-indices of all connected  graphs with size $m$.

\begin{flushleft}
\textbf{Keywords:} Ordering, Size, $Q$-index,
Girth
\end{flushleft}
\textbf{AMS Classification:} 05C50 05C35
\section{Introduction}\label{se-1}
All graphs  considered here are  simple and undirected.   Let $G=(V(G),E(G))$ be a graph with vertex set $V(G)$ and  edge set $E(G)$, where $n(G)=|V(G)|$ denote the \emph{order} and $|E(G)|=m(G)$ the \emph{size} of $G$.
The set of the neighbors of a vertex $v\in V(G)$ is denoted
by $N_{G}(v)$, and  the degree of $v$ is denoted by $d_G(v)$ or $d(v)$.
Let $\Delta=\Delta(G)$  be the maximum degree of $G$.
The \emph{girth} of a graph $G$, denoted by $g$, is the length of the shortest cycle  in $G$.
Let $A(G)$ and $D(G)$ be the adjacency matrix and the diagonal degree matrix of $G$, respectively. The \emph{signless Laplacian matrix} of $G$ is defined as
$Q(G) = D(G) + A(G)$.
The largest eigenvalue of $Q(G)$ is called the \emph{$Q$-index} of $G$, denoted by $q(G)$. Note that $Q(G)$ is a non-negative matrix.
From Perron-Frobenius theorem, there exists a non-negative unit eigenvector
$\mathbf{x}$ corresponding to $q(G)$.
Such eigenvector $\mathbf{x}$  is called the \emph{Perron vector} of $Q(G)$
and
the entry of $\mathbf{x}$ corresponding to vertex $u$ is denoted by $x_u$.
Moreover,
if $G$ is connected then the Perron vector $\mathbf{x}$ is a  positive vector.
As usual,
let $K_{1,n-1}$, $K_n$, $C_n$ and $P_n$ be  respectively  the star, complete graph, cycle and the path of order $n$.

 In 1981, Cvetkovi\'{c} \cite{Cvetkovic-direct} indicated 12 directions in further investigations of graph spectra,
one of which is ``classifying and ordering graphs''. Hence ordering graphs with various properties
by their spectra becomes an attractive topic (see \cite{Chang,Yuan,Yuan-Liu,Lin,
%SG-Guo,Guo-Miao,
Liu-liu,Wei-Liu}).
The signless Laplacian matrix is a remarkable matrix of graphs, and it is extensively
studied by researchers. Cvetkovi\'{c} and Simi\'{c} presented a series of surveys on the signless
Laplacian spectral theory \cite{Cvetkovic1,Cvetkovic2,Cvetkovic3}.
 Up to now, a simple and
generality method on ordering graphs according to their spectra  (or $Q$-spectra ) has not yet been obtained.

In  particular,  there exists  a significant amount of research on determining  the graph with the first  largest $Q$-index in a giving graph set.
The giving graph set usually contains two aspects:  $H$-free graph set,
for example,  $K_{r+1}$-free, $C_4$-free, $C_6$-free,  $C_k$-free for $k$ is odd,
 and minors-free \cite{turan,zhai-1,zhai-lin-1,Nikiforov-2,M.F,M.C,C.M}.
On the other side, the graph set with giving order and some graph invariant,
such as,  diameter \cite{Feng-L,Wang-J},
  clique number\cite{He-B},
  chromatic number\cite{Yu-G},
  graphic  degree  sequence \cite{zhang-xd} and so on.
One of other graph invariants is the girth,  which has a rich  research.
Given girth and order, the maximum $Q$-index of unicycle, bicycle graph are determined in \cite{wang-uni-bi} and \cite{zhuzx}, one of tricycle, $k$-cyclic graph are determined in    \cite{wang-tri} and  \cite{Liu-mh}, respectively.
Also,
the graph set with giving size and graph invariant,  one can see,
diameter  \cite{Lou-Z},
clique number (resp. chromatic number)  \cite{zhai-xue-lou},
girth (resp. circumference)  \cite{Zhai}, matching number  \cite{zhai-xue}.

All these results as mentioned above   determined the graph with the first largest $Q$-index.
However, there has little progress in the study for the spectral radius ordering problem, and there are few results related to the second largest, third largest, etc.
In \cite{Liu-liu,Yuan,Yuan-Liu,Lin}, the authors compared the  $Q$-indcies of two graphs by comparing their maximum degrees.
In  \cite{Liu-liu}, Liu, Liu and Cheng stated that the ordering of trees according to their $Q$-indices can be transferred to
the ordering of trees with large maximum degree. However, there is no way to compare the $Q$-indices of two graphs with the same maximum degree.
Moreover,
there are rarely results and methods for ordering in a giving graph set.
In 2006, Guo \cite{guo} determined the first $(\lfloor\frac{d}{2}\rfloor+2)$-th largest $Q$-indices (resp. Laplace spectral radius) of trees of giving order and diameter $d$. Very recently, Jia, Li and Wang \cite{Jia-Li} ordered the second to the
$(\lfloor\frac{d}{2}\rfloor+1)$-th $Q$-indices of graphs of giving size and diameter.

Inspired in above researches,  in this paper we intend to order of $Q$-indices in a giving graph set.
Girth is a graph variant that are
widely concerned by researchers in graph spectral theory.
 Chen, Wang and Zhai in 2022 gave the first largest $Q$-index of graph of giving size and girth.
Let $\mathbb{G}_m$ be the family of connected graphs  with  $m$ edges,
$\mathbb{G}(m,g)$ and  $\mathbb{G}(m,\geq g)$ are respectively the subset of $\mathbb{G}_m$  with girth equals to $g$ and  girth no less than $g$,
where $g\ge3$.
In this paper, we respectively consider  the order of the three families $\mathbb{G}(m,g)$,  $\mathbb{G}(m,\ge g)$ and $\mathbb{G}_m$   via their  $Q$-indices.
%We firstly classify the graphs by their maximum degrees. If maximum degrees of two graphs are different, we can compare their  $Q$-indices by maximum degree, and if their maximum degrees are the same, we will use spectral analysis and some spectral techniques to order  the $Q$-indices of graphs.

For $g\geq 3$, we always
denote by  $C_g=012\cdots(g-2)(g-1)0$  the   cycle of length $g$.
For $0\le i\le \lfloor\frac{g}{2}\rfloor$,
let $G_i\in \mathbb{G}(m, g)$ be a graph obtained from $C_g$ by attaching $m-g-1$ pendant edges to the vertex $0$ and simultaneously adding a pendant edge, say $iw$, at the vertex $i$  (see Fig.\ref{girth}).
Let $G_v$ be a graph obtained from $C_g$  by attaching  $m-g-2$ pendent edges  and a $P_3$,  respectively,  at the vertex $0$, where $P_3=0vv_1$ (see Fig.\ref{girth}).
The main results of this paper are presented as follows.

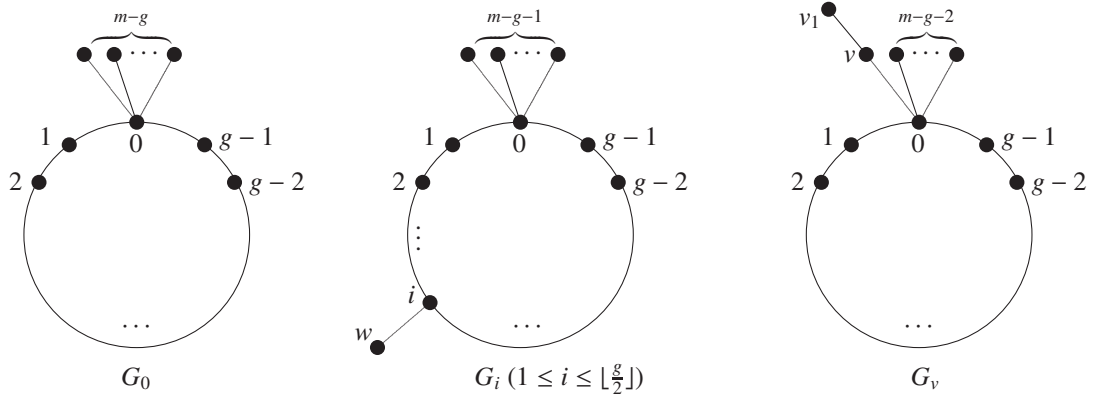
\begin{figure}[h]
  \centering
   \footnotesize
\unitlength 1mm % = 2.845pt
\linethickness{0.4pt}
\ifx\plotpoint\undefined\newsavebox{\plotpoint}\fi % GNUPLOT compatibility
\begin{picture}(136,51)(0,0)
\put(17,35){\circle*{2}}
\put(8,32){\circle*{2}}
\put(26,32){\circle*{2}}
\put(4,27){\circle*{2}}
\put(30,27){\circle*{2}}
\put(15,7){$\cdots$}
%\emline(10,44)(17,35)
\multiput(10,44)(.033653846,-.043269231){208}{\line(0,-1){.043269231}}
%\end
\put(14,44){\line(1,-3){3}}
%\emline(22,44)(17,35)
\multiput(22,44)(-.033557047,-.060402685){149}{\line(0,-1){.060402685}}
%\end
\put(10,44){\circle*{2}}
\put(14,44){\circle*{2}}
\put(22,44){\circle*{2}}
\put(16,44){$\ldots$}
\put(16,31){$0$}
\put(4,32){$1$}
\put(0,26){$2$}
\put(28,32){$g-1$}
\put(32,26){$g-2$}
\put(11,45){$\overbrace{ \qquad \ \ \ \ }^{m-g}$}
\put(68,35){\circle*{2}}
\put(59,32){\circle*{2}}
\put(77,32){\circle*{2}}
\put(55,27){\circle*{2}}
\put(81,27){\circle*{2}}
\put(54,18){$\vdots$}
\put(67,7){$\cdots$}
%\emline(61,44)(68,35)
\multiput(61,44)(.033653846,-.043269231){208}{\line(0,-1){.043269231}}
%\end
\put(65,44){\line(1,-3){3}}
%\emline(73,44)(68,35)
\multiput(73,44)(-.033557047,-.060402685){149}{\line(0,-1){.060402685}}
%\end
\put(61,44){\circle*{2}}
\put(65,44){\circle*{2}}
\put(73,44){\circle*{2}}
\put(67,44){$\ldots$}
\put(67,31){$0$}
\put(55,32){$1$}
\put(51,26){$2$}
\put(79,32){$g-1$}
\put(83,26){$g-2$}
\put(62,45){$\overbrace{ \qquad \ \ \ \ }^{m-g-1 }$}
\put(121,35){\circle*{2}}
\put(112,32){\circle*{2}}
\put(130,32){\circle*{2}}
\put(108,27){\circle*{2}}
\put(134,27){\circle*{2}}
\put(119,7){$\cdots$}
%\emline(114,44)(121,35)
\multiput(114,44)(.033653846,-.043269231){208}{\line(0,-1){.043269231}}
%\end
\put(118,44){\line(1,-3){3}}
%\emline(126,44)(121,35)
\multiput(126,44)(-.033557047,-.060402685){149}{\line(0,-1){.060402685}}
%\end
\put(114,44){\circle*{2}}
\put(118,44){\circle*{2}}
\put(126,44){\circle*{2}}
\put(120,44){$\ldots$}
\put(120,31){$0$}
\put(108,32){$1$}
\put(104,26){$2$}
\put(132,32){$g-1$}
\put(136,26){$g-2$}
\put(118,45){$\overbrace{}^{m-g-2 }$}
\put(56,11){\circle*{2}}
%\emline(56,11)(49,5)
\multiput(56,11)(-.039325843,-.033707865){178}{\line(-1,0){.039325843}}
%\end
\put(49,5){\circle*{2}}
\put(53,11){$i$}
\put(46,6){$w$}
\put(15,0){$G_0$}
\put(62,0){$G_i$ ($1\le i\le \lfloor\frac{g}{2}\rfloor$)}
\put(120,0){$G_v$}
\put(114,44){\line(-5,6){5}}
\put(109,50){\circle*{2}}
\put(111,43){$v$}
\put(105,48){$v_1$}
\put(17,20){\circle{30}}
\put(68,20){\circle{30}}
\put(121,20){\circle{30}}
\end{picture}
  \caption{\footnotesize The graphs $G_0$, $G_i$  ($1\le i\le \lfloor\frac{g}{2}\rfloor$) and $G_v$}\label{girth}
\end{figure}

\begin{thm}\label{order-girth}
Among all graphs  in $\mathbb{G}(m, g)$ with $m\ge 3g\ge 12$,   the order of  the first $(\lfloor\frac{g}{2}\rfloor+2)$  largest  $Q$-indices of graphs
is given by:
$$q(G_0)> q(G_1)> q(G_v)> q(G_2)>q(G_3)> \cdots > q(G_{\lfloor\frac{g}{2}\rfloor}).$$
\end{thm}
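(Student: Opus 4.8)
The plan is to prove the statement in two stages: \textbf{(I)} show that the $\lfloor\frac g2\rfloor+2$ displayed graphs are exactly the members of $\mathbb{G}(m,g)$ with the $\lfloor\frac g2\rfloor+2$ largest $Q$-indices, and \textbf{(II)} establish the displayed chain of strict inequalities among them.

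\textbf{Stage I.} Here I would use the elementary bound $q(G)\le\max_{u\in V(G)}\{d(u)+m(u)\}$, where $m(u)=\frac1{d(u)}\sum_{v\sim u}d(v)$ is the average degree of the neighbours of $u$; this follows at once on applying $Q(G)$ to the positive vector whose $u$-entry is $d(u)$. Since $m\ge 3g\ge 12$ forces $g\ge 4$, every $G\in\mathbb{G}(m,g)$ is triangle-free, so no edge has both ends in $N(u)$ and hence $\sum_{v\sim u}d(v)\le m$ for every vertex $u$, i.e. $m(u)\le m/d(u)$; combining this with $m(u)\le\Delta(G)$ and $m\ge 3g$, a short case check on $d(u)\in\{1,2\}$ versus $d(u)\ge 3$ (using convexity of $t\mapsto t+m/t$) shows that $\Delta(G)\le m-g$ implies $q(G)\le m-g+2$. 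On the other hand $G_{\lfloor g/2\rfloor}$ properly contains the star $K_{1,m-g+1}$ and is connected, so $q(G_{\lfloor g/2\rfloor})>m-g+2$; hence any graph with $Q$-index at least $q(G_{\lfloor g/2\rfloor})$ satisfies $\Delta(G)\ge m-g+1$. An edge count against a shortest cycle (at most two of whose edges meet a fixed vertex) gives $m-\Delta(G)\ge g-2$, i.e. $\Delta(G)\le m-g+2$ with equality forcing $G=G_0$; and enumerating the graphs of $\mathbb{G}(m,g)$ with $\Delta(G)=m-g+1$ --- the cycle must pass through the maximum-degree vertex, leaving exactly one ``extra'' edge off the cycle and off that vertex, and the girth constraint rules out every placement of that edge except those producing $G_1,\dots,G_{\lfloor g/2\rfloor}$ and $G_v$ --- shows that for $g\ge 5$ these are precisely $G_1,\dots,G_{\lfloor g/2\rfloor},G_v$. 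For $g=4$ there is one additional (bicyclic) graph with $\Delta=m-g+1$, which I would dispatch by a direct comparison of the associated equitable quotient matrices, showing its $Q$-index lies below $q(G_2)$. This completes Stage I.

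\textbf{Stage II.} The key observation is that each of $G_0,G_1,\dots,G_{\lfloor g/2\rfloor},G_v$ is obtained from the single graph $H$, namely $C_g$ with $m-g-1$ pendant edges at vertex $0$, by attaching one further pendant edge: at vertex $0$ for $G_0$, at the cycle vertex $i$ for $G_i$, and at a pendant neighbour of $0$ for $G_v$. I would then use the standard pendant-relocation estimate: if $w$ is a pendant of a connected graph $G$ with neighbour $a$, $b$ is another vertex, and $\mathbf x$ is the unit Perron vector of $Q(G)$ with $x_b\ge x_a$, then
\[
q(G-aw+bw)\;\ge\;\mathbf x^{\!\top}Q(G-aw+bw)\,\mathbf x\;=\;q(G)+(x_b-x_a)(x_b+x_a+2x_w),
\]
which is strict when $x_b>x_a$. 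Each consecutive step of the chain arises from one such relocation. For $q(G_0)>q(G_1)$ move the pendant of $G_1$ from the cycle vertex $1$ to vertex $0$; since $q(G_1)>\Delta(G_1)$ and $0$ is the unique maximum-degree vertex one has $x_0>x_1$ in $G_1$ (alternatively, quote the known maximality of $G_0$ in $\mathbb{G}(m,g)$). For $q(G_1)>q(G_v)$ move the outer pendant $v_1$ of $G_v$ from $v$ to the cycle vertex $1$; subtracting the eigenequations at $1$ and $v$ gives $x_1>x_v\iff x_2>x_{v_1}$ in $G_v$, and the eigenequations at $1,2,v,v_1$ yield $x_2>x_0/(q-2)^2\ge x_0/(q^2-3q+1)=x_{v_1}$, valid since $q=q(G_v)>3$. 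For $q(G_v)>q(G_2)$ move the pendant of $G_2$ from the cycle vertex $2$ to a pendant neighbour $v$ of $0$; the needed inequality is $x_v>x_2$ in $G_2$, i.e. $x_0/(q-1)>x_2$. Finally, for $q(G_i)>q(G_{i+1})$ with $2\le i\le\lfloor\frac g2\rfloor-1$, move the pendant of $G_{i+1}$ from vertex $i+1$ to vertex $i$, the needed inequality being $x_i>x_{i+1}$ in $G_{i+1}$. The last two reduce to the single fact that the Perron vector of these graphs strictly decreases along the cycle from $0$ towards vertex $\lfloor\frac g2\rfloor$. For the unperturbed graph $H$ this is clean: at each non-central cycle vertex $j$ the eigenequation $(q-2)x_j=x_{j-1}+x_{j+1}$ with $q>4$ makes the cycle sequence strictly convex there, while the large degree term makes the convexity relation fail at $0$; so the forward differences $x_j-x_{j+1}$ are strictly decreasing in $j$ around the cycle, sum to zero, and --- using the reflection symmetry $x_j=x_{g-j}$ of $H$ --- this forces $x_0>x_1>\cdots>x_{\lfloor g/2\rfloor}$, with a pendant of $0$ having value $x_0/(q-1)$ lying strictly between $x_1$ and $x_2$. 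One then checks that a single extra pendant changes the relevant diagonal coefficient only from $q-2$ to $q-3-\tfrac1{q-1}$, still exceeding $2$ when $q\ge q(H)>m-g+2\ge 2g+2$, so the convexity, hence the monotonicity along the cycle, survives the perturbation --- the gap between consecutive Perron entries being geometrically large in $q$ while the perturbation is comparatively tiny. This is precisely where $m\ge 3g$ is used quantitatively.

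\textbf{The main obstacle.} The substance is in Stage II, and within it the inequalities $x_v>x_2$ (for $q(G_v)>q(G_2)$) and $x_i>x_{i+1}$ (for the chain among the $G_i$): these pit a pendant neighbour of the heavy vertex $0$, or a cycle vertex, against a neighbouring cycle vertex that carries its own pendant, so one must simultaneously control the decay of the Perron vector along the cycle and the small boost the extra pendant gives to the nearby entry --- exactly what the hypothesis $m\ge 3g$, forcing $q$ and the spectral gap of $Q(H)$ to be large, makes possible. The lone exceptional bicyclic graph for $g=4$ in Stage I is a minor but unavoidable nuisance, requiring an explicit eigenvalue comparison rather than the general argument.
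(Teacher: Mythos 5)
Your overall architecture mirrors the paper's: stratify $\mathbb{G}(m,g)$ by maximum degree via the bound $q(G)\le\max_u\{d(u)+m(u)\}$ so that only the graphs with $\Delta\ge m-g+1$ can appear at the top of the ordering, then compare those finitely many graphs by relocating a pendant edge according to the Perron entries. One methodological difference in Stage II is worth flagging: for the chain $q(G_i)>q(G_{i+1})$ the paper never establishes monotonicity of the Perron vector along the cycle. Instead it runs an iterated alternative --- if $x_{i-1}\ge x_i$ move the pendant; otherwise find the first $j$ with $x_{i-j-1}\ge x_{i+j}$ and apply the local switching of the edges $(i-j-1)(i-j)$ and $(i+j-1)(i+j)$; if no such $j$ exists, so that $x_0<x_{2i-1}$, move all $m-g-1$ pendants from $0$ to $2i-1$ --- which requires no quantitative control of the profile of $\mathbf x$ at all. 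Your convexity-plus-perturbation argument for $x_0>x_1>\cdots>x_{\lfloor g/2\rfloor}$ is plausible, but the step ``the perturbation is comparatively tiny'' is exactly the kind of estimate that has to be carried out, not asserted; as written it is a sketch, whereas the paper's comparisons of $G_v$ with $G_1$ and $G_2$ are done by explicit characteristic-polynomial and entry bounds resting on the a priori bounds $q>m-g+2$ and $x_0^2>\tfrac12$.

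The genuine gap is in Stage I, at the exceptional graph for $g=4$. You are right that the enumeration of the graphs with $\Delta=m-g+1$ must, for $g=4$, include the bicyclic graph $B$ obtained from $C_4=01230$ by joining a new vertex $v$ to both $0$ and $2$ and attaching $m-6$ pendants at $0$ (this graph is in fact also missed by the paper's Lemma 3.1, whose edge count $m(G')=m-1$ fails for it). But your proposed dispatch --- that $q(B)$ lies below $q(G_2)$ --- is false. Using the equitable partitions $\{0\},\{2\},\{1,3,v\},\{\text{pendants}\}$ of $B$ and $\{0\},\{1,3\},\{2\},\{w\},\{\text{pendants}\}$ of $G_2$, one finds for $m=12$ that $q(B)$ is the largest root of $x^3-15x^2+53x-33$, approximately $10.056$, while $q(G_2)$ is the largest root of $x^4-16x^3+68x^2-84x+24$, approximately $10.033$; the cubic is still negative at $x=q(G_2)$, so $q(B)>q(G_2)$, and the same happens for $m=13$. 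Hence $B$ cannot be pushed below $q(G_2)$, your Stage I cannot be completed as described, and indeed $B$ appears to be a counterexample to the statement itself at $g=4$: it is a graph of $\mathbb{G}(m,4)$ outside the displayed list whose $Q$-index exceeds the fourth listed value. So what you treated as a ``minor but unavoidable nuisance'' is in fact the point at which both your argument and the claimed ordering (for $g=4$) break; for $g\ge5$ no such graph exists, since joining a pendant neighbour of $0$ to a cycle vertex would create a cycle of length at most $2+\lfloor g/2\rfloor<g$.
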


We use $G_{i, g}$ and $G_{v, g}$ instead of  $G_i$ ($0\le i\le \lfloor\frac{g}{2}\rfloor$) and $G_v$,  respectively,  if we emphasize that its girth equals to $g$.
%Denote  $\tilde{G}_0$ a graph obtained from a cycle $C_{g+1}=012\cdots (g-1)g0$ by attaching $m-g-1$ pendent edges at vertex $0$.
For the order of graphs in $\mathbb{G}(m, \geq g)$,
we get the  following result.
\begin{thm}\label{order-girth-1}
Among all graphs  in $\mathbb{G}(m, \geq g)$ with $m\ge 3g\ge 12$,   the order of the first $(\lfloor\frac{g}{2}\rfloor+3)$ largest $Q$-indices of graphs is given by:
$$q(G_{0,g})> q(G_{1,g})> q(G_{v,g})> q(G_{2,g})>q(G_{3,g})> \cdots > q(G_{\lfloor\frac{g}{2}\rfloor,g})>q(G_{0,g+1}).$$
\end{thm}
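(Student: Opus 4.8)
The plan is to split $\mathbb{G}(m,\ge g)=\mathbb{G}(m,g)\cup\mathbb{G}(m,\ge g+1)$ and to use Theorem~\ref{order-girth} verbatim for the contribution of girth exactly $g$. Writing $\mathcal{L}$ for the $\lfloor g/2\rfloor+2$ graphs appearing in Theorem~\ref{order-girth}, the assertion is equivalent to three statements: (A) $q(G_{\lfloor g/2\rfloor,g})>q(G_{0,g+1})$; (B) $q(H)<q(G_{0,g+1})$ for every $H\in\mathbb{G}(m,\ge g+1)$ with $H\ne G_{0,g+1}$; and (C) $q(H)<q(G_{0,g+1})$ for every $H\in\mathbb{G}(m,g)\setminus\mathcal{L}$. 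Granting these, (A) together with Theorem~\ref{order-girth} gives the displayed chain, while (B) and (C) guarantee that no other graph in $\mathbb{G}(m,\ge g)$ reaches the value $q(G_{0,g+1})$.

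For (B) I would first establish the monotonicity $q(G_{0,h})>q(G_{0,h+1})$ for admissible $h\ge g$. The point is that $G_{0,h+1}$ arises from $G_{0,h}$ by the single edge shift $G_{0,h+1}=G_{0,h}-v_0c_1+pc_1$, where $v_0$ is the hub, $c_1$ a cycle neighbour of $v_0$ and $p$ a pendant neighbour of $v_0$; since in $G_{0,h+1}$ the Perron entry at the high‑degree vertex $v_0$ strictly exceeds that at the degree‑$2$ vertex $p$, evaluating the Rayleigh quotient of $Q(G_{0,h})$ at the Perron vector of $Q(G_{0,h+1})$ yields $q(G_{0,h})>q(G_{0,h+1})$. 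Now if $H$ has girth exactly $g+1$, the known maximum‑$Q$‑index characterization for prescribed size and girth (due to Chen, Wang and Zhai, applied with girth $g+1$; the finitely many sizes not covered by Theorem~\ref{order-girth} at girth $g+1$ being handled directly) gives $q(H)\le q(G_{0,g+1})$, with equality only for $H=G_{0,g+1}$; and if $H$ has girth $h\ge g+2$, the same characterization together with the monotonicity gives $q(H)\le q(G_{0,h})\le q(G_{0,g+2})<q(G_{0,g+1})$. This settles (B).

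For (C) I would record the crude bound $q(G_{0,g+1})>q(K_{1,\,m-g+1})=m-g+2$ (add the cycle path to the star centred at the hub of $G_{0,g+1}$), and then classify: a graph $H\in\mathbb{G}(m,g)$ with $\Delta(H)\ge m-g+1$ lies in $\mathcal{L}$. Indeed its vertex of degree $\ge m-g+1$ must lie on a shortest $g$‑cycle (only $g-1<g$ edges avoid it), which leaves precisely one free edge when $\Delta=m-g+1$ and none when $\Delta=m-g+2$; keeping the girth equal to $g$ forces the free edge either to be a pendant at a cycle vertex or to prolong a hub‑pendant to a $P_3$, so $H\in\mathcal{L}$ when $g\ge 5$ (the single extra configuration possible when $g=4$ being disposed of by a direct comparison with $G_{0,g+1}$). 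Hence every $H\in\mathbb{G}(m,g)\setminus\mathcal{L}$ has $\Delta(H)\le m-g$; as $g\ge 4$ makes $H$ triangle‑free, one has $\sum_{u\sim v}d(u)=d(v)+e(N(v),V\setminus(N(v)\cup\{v\}))\le m$ for every $v$, whence the row‑sum estimate for the nonnegative matrix $D^{-1}(D+A)D$ (similar to $Q(H)$) gives $q(H)\le\max_{v}\{d(v)+m(v)\}\le\max_{v}\{d(v)+m/d(v)\}$, with $m(v)$ the average degree of the neighbours of $v$; using $m\ge 3g$ one checks $k+m/k<m-g+2$ for $2\le k\le m-g$ and $d(v)+m(v)\le 1+\Delta(H)\le m-g+1$ when $d(v)=1$, so $q(H)<m-g+2<q(G_{0,g+1})$, which gives (C).

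It remains to prove (A), which I expect to be the crux, along with the estimate in (C). One sees that $G_{\lfloor g/2\rfloor,g}=G_{0,g+1}-d_jd_{j+1}+d_{j-1}d_{j+1}$ with $j=\lfloor g/2\rfloor+1$, relabelling the cycle of $G_{0,g+1}$ as $v_0d_1\cdots d_gv_0$, so a Rayleigh‑quotient comparison at the Perron vector $\mathbf{x}$ of $Q(G_{0,g+1})$ reduces (A) to the inequality $x_{d_{j-1}}>x_{d_j}$ on the cycle. Because the sequence of cycle entries is convex and symmetric about the endpoint $v_0$, this is immediate when $g$ is odd (then $d_j$ is the unique antipode of $v_0$ and $d_{j-1}$ is strictly closer); for $g$ even the two entries coincide, and one must instead compare the characteristic polynomials of the size‑$O(g)$ quotient matrices of the two graphs, exploiting $m\ge 3g$ to control the hub rows. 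The genuinely delicate points of the whole argument are therefore this even‑girth comparison in (A) and the quantitative step in (C) — making precise how little the cycle‑plus‑pendant structure can add above the threshold $m-g+2$; the remaining ingredients follow once Theorem~\ref{order-girth} and the maximum‑$Q$‑index characterization for given size and girth are taken as known.
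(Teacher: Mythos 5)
Your overall architecture is sound and you have correctly isolated the crux, namely (A): $q(G_{\lfloor g/2\rfloor,g})>q(G_{0,g+1})$ via the edge switch around the vertex of the $(g+1)$-cycle farthest from the hub. But the even-$g$ half of (A) is a genuine gap, not merely a deferred computation. As you observe, for even $g$ the reflection symmetry of $G_{0,g+1}$ forces $x_{d_{j-1}}=x_{d_j}$, so a Rayleigh-quotient comparison only yields $q(G_{\lfloor g/2\rfloor,g})\ge q(G_{0,g+1})$, and you fall back on an unspecified comparison of quotient-matrix characteristic polynomials ``exploiting $m\ge 3g$ to control the hub rows'' --- which is exactly the part that would have to be carried out and is not. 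The missing idea is that the edge-rotation lemma of Hong and Zhang (Lemma~\ref{lem-perron} in the paper) already gives the \emph{strict} inequality $q(G)<q(G')$ from the \emph{non-strict} hypothesis $x_u\ge x_v$: deleting $d_jd_{j+1}$ and adding $d_{j-1}d_{j+1}$ with $x_{d_{j-1}}\ge x_{d_j}$ immediately gives $q(G_{\lfloor g/2\rfloor,g})>q(G_{0,g+1})$ in both parities, with no polynomial computation. (For odd $g$ you also do not need the unproved ``convex and symmetric'' profile of the cycle entries: the eigenvalue equation at the antipode $d_j$ reads $(q-2)x_{d_j}=2x_{d_{j-1}}$, whence $x_{d_{j-1}}>x_{d_j}$ directly since $q>8$.) A second, smaller gap of the same kind sits in (C): the extra $g=4$ configuration with $\Delta=m-g+1$ that you mention (a pendant neighbour of the hub joined to the antipodal cycle vertex) does not satisfy $\Delta\le m-g$, so it escapes your row-sum bound, and the promised ``direct comparison with $G_{0,g+1}$'' is never made.

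On the route itself: the paper partitions $\mathbb{G}(m,\ge g)$ by maximum degree rather than by girth. Graphs with $\Delta\le m-g$ have $q\le m-g+2$ by the upper bound of Theorem~\ref{outG-0}(i) (since $m-g\ge 2m/3$), while every graph with $\Delta\ge m-g+1$ has $q>m-g+2$ by Theorem~\ref{outG-0}(ii), and the latter class is exactly the $\lfloor g/2\rfloor+3$ listed graphs. This collapses your (B) and (C) into two lines and avoids the Chen--Wang--Zhai extremal characterization, the monotonicity of $q(G_{0,h})$ in $h$, and the triangle-free refinement of the row-sum bound --- all of which are plausible but each of which would need its own justification (in particular $x_{v_0}>x_p$ in your monotonicity step). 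Your decomposition can be made to work, but as written the proof is incomplete precisely at the points you yourself flag as delicate.
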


%\textcolor{blue}{Denote $G_{01}$ the graph obtained from $C_3$ by attaching $m-3$ pendant edges to  the same vertex of $C_3$.}
Let $B_1$ be the bicycle graph obtained from
two triangles with a common edge by join $m-5$ pendant edges to its one end-vertex (see Fig.\ref{graphsg=3}).
Let $B_2$ be the bicycle graph obtained from
two triangles with a common vertex by join $m-6$ pendant edges to it (see Fig.\ref{graphsg=3}).
As   supplement  of Theorem \ref{order-girth},  we give the order of the first five largest $Q$-indices for $g=3$,  which has some different with the result of
Theorem \ref{order-girth}.
\begin{thm}\label{order-g=3}
Among all graphs  in $\mathbb{G}(m, 3)$ with $m\ge 9$,   the order of the
 first five largest $Q$-indices  is given by:
 $q(G_{0,3})>q(B_1)>q(B_2)>q(G_{1,3})>q(G_{v,3}).$
\end{thm}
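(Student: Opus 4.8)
The argument naturally splits into a reduction step that isolates the five graphs, and a comparison step that orders them.

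\emph{Step 1 (isolating the candidates).} First I would observe that a connected graph with $m$ edges containing a cycle has at most $m$ vertices, so every $G\in\mathbb{G}(m,3)$ satisfies $\Delta(G)\le m-1$, with equality only when $G$ is unicyclic with a universal vertex; since the girth is $3$, this forces $G=G_{0,3}$. If $\Delta(G)=m-2$, a cyclomatic-number count shows $G$ is unicyclic ($n=m$) or bicyclic ($n=m-1$); analysing where a vertex of degree $m-2$ can lie relative to the shortest cycle (a triangle), and noting that every bicyclic core with at least seven edges, and the $\theta$-type core with six edges, leaves too few incident edges to reach degree $m-2$, one is left with exactly the unicyclic $G_{1,3},G_{v,3}$ and the bicyclic $B_1$ (diamond core) and $B_2$ (bowtie core). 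Finally, for $\Delta(G)\le m-3$ I would invoke an upper bound for $q(G)$ in terms of degrees and average neighbour degrees, e.g. $q(G)\le\max_{v}\bigl(d(v)+m_v\bigr)$ together with $\sum_{w\sim u}d(w)\le 2m-d(u)$ at a maximum-degree vertex $u$, to obtain $q(G)\le m-1$, hence $q(G)<q(G_{v,3})$ since $q(G_{v,3})>\Delta(G_{v,3})+1=m-1$; the finitely many small $m$ are then checked directly. This would show that the five largest $Q$-indices in $\mathbb{G}(m,3)$ are attained by $G_{0,3},B_1,B_2,G_{1,3},G_{v,3}$ in some order.

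\emph{Step 2 (ordering the five).} Each of these graphs admits an equitable partition with at most five cells (a dominating vertex, its class of pendant neighbours, and the few vertices of the local core), so $q(G)$ is the largest root of the characteristic polynomial $\phi_G(\lambda)$ of the corresponding small quotient matrix, and---as all five $Q$-indices exceed $3$---it is this root rather than one of the ``internal'' eigenvalues. To prove $q(X)>q(Y)$ for a consecutive pair I would evaluate $\phi_X$ at $\lambda=q(Y)$, reduce modulo the defining relation $\phi_Y(q(Y))=0$ to eliminate the high powers of $q(Y)$, and show the remaining low-degree expression is negative at $\lambda=q(Y)$ using the a priori bound $q(Y)\ge m-1$ and $m\ge 9$; since $\phi_X(\lambda)>0$ for $\lambda>q(X)$ this gives $q(Y)<q(X)$. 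For $q(B_1)>q(B_2)$ the quotient polynomials are $\phi_{B_2}(\lambda)=\lambda^3-(m+2)\lambda^2+3(m-1)\lambda-8$ and $\phi_{B_1}(\lambda)=\lambda^4-(m+5)\lambda^3+(6m+1)\lambda^2-(7m+3)\lambda+16$, and reduction of the quartic modulo the cubic collapses $\phi_{B_1}(q(B_2))$ to $-2\bigl(q(B_2)^2-(m-2)q(B_2)+4\bigr)$, which is negative because $q(B_2)\ge m-1$ exceeds the larger root of $t^2-(m-2)t+4$, itself smaller than $m-2$. The comparison $q(G_{0,3})>q(B_1)$ is the comfortable one: it suffices to note $q(B_1)<m<q(G_{0,3})$, which follows from $\phi_{G_{0,3}}(m)=-4<0$ together with a short estimate on the eigenvalue equation bounding $q(B_1)$. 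The comparison $q(B_2)>q(G_{1,3})$ is again handled by the substitute-and-reduce scheme, but is tight. Finally $q(G_{1,3})>q(G_{v,3})$ is precisely the inequality $q(G_1)>q(G_v)$ of Theorem~\ref{order-girth} specialised to $g=3$: although that theorem is stated for $g\ge4$, the step comparing $G_1$ with $G_v$ carries over verbatim, or one simply reruns the reduction argument with the explicit polynomials of $G_{1,3}$ and $G_{v,3}$.

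\emph{Main obstacle.} The genuine difficulty lies in the comparisons below $G_{0,3}$: one finds $q(B_1)=q(B_2)=q(G_{1,3})=q(G_{v,3})=m-1+O(1/m^{2})$, and even $q(B_1)-q(B_2)=O(1/m^{3})$, so no crude estimate separates them---every cancellation in the reduction modulo $\phi_Y$ must be carried out exactly, and the real work is to arrange the algebra so that the resulting witness (such as $q(B_2)^2-(m-2)q(B_2)+4$ above) is a concrete low-degree polynomial whose sign is transparent uniformly for $m\ge 9$. The $\Delta(G)\le m-3$ portion of Step~1 is a secondary, more routine nuisance: the chosen upper bound for $q(G)$ must be made just sharp enough to drop below $q(G_{v,3})$, with a handful of small $m$ verified by hand.
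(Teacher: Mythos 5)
Your overall architecture matches the paper's: isolate $G_{0,3}$ as the unique graph with $\Delta=m-1$, identify $\{B_1,B_2,G_{1,3},G_{v,3}\}$ as exactly the graphs with $\Delta=m-2$, kill everything with $\Delta\le m-3$ by the degree-based bound $q(G)\le\Delta+2\le m-1<q(G_{v,3})$, and then order the four $\Delta=(m-2)$ graphs by comparing largest roots of small quotient polynomials. The comparisons $q(B_2)>q(G_{1,3})$ and $q(G_{1,3})>q(G_{v,3})$ are done in the paper exactly by your substitute-and-reduce scheme (evaluate $\varphi_X$ at $q(Y)$, reduce modulo $\varphi_Y$, and show the low-degree remainder is negative using $q(Y)>m-1$). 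The one place you genuinely diverge is $q(B_1)>q(B_2)$: the paper avoids polynomials entirely by noting that the Perron vector of $B_2$ satisfies $x_1=x_{w_1}$ by symmetry, so the single edge switch $B_2-w_1w_2+1w_2\cong B_1$ increases the $Q$-index by the Perron-vector rotation lemma. That argument is one line and requires no exact cancellation, which is worth preferring precisely because of the $O(1/m^3)$ gap you correctly identify as the obstacle to crude estimates. Your route also works, but your explicit quartic for $B_1$ is off: the correct quotient polynomial is $x^4-(m+4)x^3+(5m-1)x^2-(4m+4)x+8$ (the trace of the $4\times4$ quotient matrix is $m+4$, not $m+5$), and the reduction gives $\varphi_{B_1}(x)=(x-2)\varphi_{B_2}(x)-2\bigl(x^2-(m-1)x+4\bigr)$, so the witness quadratic is $t^2-(m-1)t+4$ rather than $t^2-(m-2)t+4$; your sign argument then goes through unchanged since $q(B_2)>m-1$. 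One further caveat: your claim that the $g\ge4$ proof of $q(G_1)>q(G_v)$ ``carries over verbatim'' to $g=3$ is not right --- that proof hinges on $S_{m-g+5,3}$ (or, for $g=4$, a modified polynomial) being a proper subgraph of $G_v$, and the $g=3$ case needs its own computation --- but your stated fallback of rerunning the reduction with the explicit degree-$5$ polynomials of $G_{1,3}$ and $G_{v,3}$ is exactly what the paper does, so this does not damage the proof.
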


Zhai, Xue and Lou \cite{zhai-xue-lou} showed that  $K_{1,m}$ attains the maximum $Q$-index among all
graphs in $\mathbb{G}_m$.
At last, we extend  the result by ordering the first eleven largest $Q$-indices among all graphs in $\mathbb{G}_m$.

\begin{thm}\label{order-m}Let $T_1$, $T_2$, $T_3$ and $T_4$ be trees on $m$ edges with $\Delta(T_1)=m-1$ and  $\Delta(T_2)=\Delta(T_3)=\Delta(T_4)=m-2$ (see Fig.\ref{some-graph}).
Among all graphs in $\mathbb{G}_m$ and $m\ge 9$,
 the order of the first eleven largest $Q$-indices  is given by:
$q(K_{1,m})>q(G_{0,3})$$>q(T_1)>q(B_1)$$>q(B_2)$
 $>q(G_{1,3})>q(G_{v,3})
>q(T_2)>q(G_{0,4})>q(T_3)>q(T_4)$.
\end{thm}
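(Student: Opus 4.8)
The plan is to stratify the connected graphs with $m$ edges by their maximum degree. From $q(G)\ge\Delta(G)+1$, with equality only for stars, every such graph with $\Delta(G)\ge m-1$ satisfies $q(G)>m$, and $q(K_{1,m})=m+1$. For an upper bound, the matrix $D(G)^{-1}Q(G)D(G)$ is similar to $Q(G)$ and nonnegative, and its $v$-th row sum equals $d_G(v)+\frac{1}{d_G(v)}\sum_{u\in N_G(v)}d_G(u)$; hence $q(G)\le\max_{v}\big(d_G(v)+\frac{1}{d_G(v)}\sum_{u\in N_G(v)}d_G(u)\big)$. A short estimate of this right-hand side---using that, for $m\ge9$, a graph with $\Delta(G)\ge m-3$ can carry only a bounded total ``excess degree'' outside a vertex of largest degree---yields $q(G)<m$ whenever $\Delta(G)\le m-2$, and $q(G)<q(T_4)$ whenever $\Delta(G)\le m-3$; the second inequality is where $m\ge9$ is really used, since it comes down to $\frac{m+3}{m-3}\le2$. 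Next I would enumerate the connected $m$-edge graphs with $\Delta(G)\ge m-2$ by inspecting the ``extra'' edges beyond the star at a vertex of largest degree: the only ones with $\Delta=m-1$ are the tree $T_1$ and the unicyclic graph $G_{0,3}$, and the only ones with $\Delta=m-2$ are exactly the eight graphs $B_1$, $B_2$, $G_{1,3}$, $G_{v,3}$ (girth $3$), $G_{0,4}$ (girth $4$), and the three trees $T_2,T_3,T_4$ of maximum degree $m-2$. Combining this with the two bounds, the eleven largest $Q$-indices are attained exactly by $K_{1,m}$, by $G_{0,3}$ and $T_1$ (both with $q$-index strictly between $m$ and $m+1$), and by the eight graphs above (all with $q$-index below $m$), so these three blocks are already correctly ordered.

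It then remains to sort within the last two blocks. For $\{G_{0,3},T_1\}$: each graph has an equitable partition with at most four cells, so $q(G_{0,3})$ and $q(T_1)$ are the spectral radii of explicit small quotient matrices of $Q$, and comparing the associated characteristic polynomials (equivalently, evaluating one at the spectral radius of the other) gives $q(G_{0,3})>q(T_1)$. For the eight graphs of maximum degree $m-2$: Theorem~\ref{order-g=3} already gives $q(B_1)>q(B_2)>q(G_{1,3})>q(G_{v,3})$, and the pendant-edge shifting lemmas used in the proofs of the earlier theorems (relocating a pendant edge toward a vertex of larger Perron-vector entry increases $q$) give $q(T_2)>q(T_3)>q(T_4)$. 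The whole order then follows once the three ``interleaving'' inequalities $q(G_{v,3})>q(T_2)$, $q(T_2)>q(G_{0,4})$ and $q(G_{0,4})>q(T_3)$ are proved; each compares two graphs that again admit equitable partitions with at most five cells, so each reduces to determining the sign of one $Q$-characteristic polynomial at a root of another. Chaining these relations yields $q(K_{1,m})>q(G_{0,3})>q(T_1)>q(B_1)>q(B_2)>q(G_{1,3})>q(G_{v,3})>q(T_2)>q(G_{0,4})>q(T_3)>q(T_4)$, and all connected $m$-edge graphs not in this list have $q$-index below $q(T_4)$ by the degree bounds above.

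The hard part is precisely this short list of ``fine'' comparisons between graphs sharing the same maximum degree: several of the relevant pairs even have the same degree sequence---for instance $G_{v,3}$ and $G_{0,4}$ both have degree sequence $(m-2,2,2,2,1,\dots,1)$---so no degree-based inequality can distinguish them. What makes them manageable is that for every one of these graphs the $Q$-characteristic polynomial factors as $(x-1)^{m-c}\psi(x)$ for a small constant $c$ and a polynomial $\psi$ of degree at most $5$ whose coefficients are affine in $m$; the required inequalities then become explicit (if delicate) polynomial inequalities in $x$ and $m$ that can be verified for all $m\ge9$. The only other point needing care is making the two boundary estimates fully rigorous over the entire range $m\ge9$, which is handled by the row-sum bound above together with the elementary case analysis of how much degree a graph with $\Delta(G)\ge m-3$ can carry away from its vertex of largest degree.
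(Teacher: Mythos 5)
Your proposal follows essentially the same route as the paper's proof: stratify $\mathbb{G}_m$ by maximum degree, use the row-sum bound of Lemma~\ref{bound} (packaged in the paper as Theorem~\ref{outG-0} and Corollary~\ref{compare-max-degree}) to separate the strata and to dispose of every graph with $\Delta\le m-3$, enumerate the eleven candidates with $\Delta\ge m-2$, and settle the remaining same-degree comparisons by a mix of quotient-matrix characteristic-polynomial evaluations and Perron-vector edge-shifting (Lemma~\ref{lem-perron}). The only differences are in which of these two tools is applied to which pair --- the paper proves $q(G_{0,3})>q(T_1)$ and $q(G_{0,4})>q(T_3)$ by edge-shifting rather than by quotient matrices, and establishes the chain $q(G_{v,3})>q(T_2)>q(G_{0,4})>q(T_3)>q(T_4)$ directly instead of first ordering $T_2,T_3,T_4$ and then interleaving $G_{0,4}$ --- so the substance of the argument is the same and the proposal is correct.
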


\section{Some lemmas and a new upper bound of $q(G)$}
In the section, we give some useful lemmas   and then give a new bound of $q(G)$.

\begin{lem}[\cite{Cvetkovic1}]\label{lem-subgraph-radius}
If $H$ is the subgraph of  a connected graph $G$, then $q(H)\le q(G)$. Particularly, if $H$ is proper then $q(H)< q(G)$.
\end{lem}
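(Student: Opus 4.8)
The plan is to run everything through the Rayleigh quotient for the symmetric, non-negative matrix $Q(G)$ together with the elementary quadratic-form identity
$$\mathbf{x}^{T}Q(G)\mathbf{x}=\sum_{uv\in E(G)}(x_u+x_v)^2,$$
which I would verify by expanding $\mathbf{x}^{T}(D(G)+A(G))\mathbf{x}=\sum_{v}d(v)x_v^2+2\sum_{uv\in E(G)}x_ux_v$ and observing that each vertex $v$ contributes to exactly $d(v)$ edges, so the cross terms reassemble into the edge-sum. Paired with the variational characterization $q(G)=\max_{\|\mathbf{x}\|=1}\mathbf{x}^{T}Q(G)\mathbf{x}$, this identity is the engine of the whole proof, since it expresses the quadratic form edge-by-edge and every summand is non-negative.

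For the non-strict inequality $q(H)\le q(G)$, I would take $\mathbf{y}$ to be a Perron vector of $Q(H)$, that is a non-negative unit vector indexed by $V(H)$, and extend it to a vector $\tilde{\mathbf{y}}$ on $V(G)$ by assigning the value $0$ to every vertex in $V(G)\setminus V(H)$; this preserves $\|\tilde{\mathbf{y}}\|=1$. Because $E(H)\subseteq E(G)$ and each term is non-negative, the identity gives
$$q(H)=\sum_{uv\in E(H)}(y_u+y_v)^2\le\sum_{uv\in E(G)}(\tilde{y}_u+\tilde{y}_v)^2=\tilde{\mathbf{y}}^{T}Q(G)\tilde{\mathbf{y}}\le q(G),$$
where the first step uses that $\tilde{\mathbf{y}}$ agrees with $\mathbf{y}$ on both endpoints of every edge of $H$, and the last step is the Rayleigh bound for the unit vector $\tilde{\mathbf{y}}$.

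For the strict inequality when $H$ is a proper subgraph, I would assume $q(H)=q(G)$ and force a contradiction. Equality makes both displayed inequalities tight. Tightness of the Rayleigh step makes $\tilde{\mathbf{y}}$ a non-negative eigenvector of $Q(G)$ for the eigenvalue $q(G)$; since $G$ is connected, $Q(G)=D(G)+A(G)$ has the same off-diagonal pattern as the irreducible matrix $A(G)$ and is therefore irreducible, so Perron--Frobenius tells us that the only non-negative eigenvector for $q(G)$ is the strictly positive Perron vector, whence $\tilde{\mathbf{y}}>0$ on all of $V(G)$. If $H$ omits a vertex, this contradicts the zero entries built into $\tilde{\mathbf{y}}$; if instead $V(H)=V(G)$ but some edge $uv\in E(G)\setminus E(H)$ is missing, then tightness of the edge-sum inequality forces $(\tilde{y}_u+\tilde{y}_v)^2=0$, impossible for positive entries. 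Either way $q(H)<q(G)$.

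The main obstacle is this last step: the argument hinges on correctly invoking the strict positivity (and simplicity) of the Perron vector of the \emph{irreducible} matrix $Q(G)$, and on separately dispatching the two ways a subgraph can be proper, namely a deleted vertex versus a deleted edge, so that each routes to its own contradiction.
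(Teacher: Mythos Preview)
The paper does not supply a proof of this lemma; it is quoted as a known result from the cited reference \cite{Cvetkovic1}. Your argument is correct and is in fact the standard proof: the edge-sum identity $\mathbf{x}^{T}Q(G)\mathbf{x}=\sum_{uv\in E(G)}(x_u+x_v)^2$ combined with the Rayleigh characterization gives the non-strict inequality immediately, and the equality analysis via Perron--Frobenius (irreducibility of $Q(G)$ for connected $G$, hence strict positivity of any non-negative eigenvector for $q(G)$) cleanly disposes of both the missing-vertex and missing-edge cases.
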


\begin{lem}[\cite{Hong}]\label{lem-perron}
Let $u$, $v$ be two distinct vertices of a connected graph $G$. Suppose  $w_1,w_2,\ldots,w_t$ $(t\ge1)$ are some vertices of $N_G(v)\setminus N_G(u)$ and $\mathbf{x}$ is the   Perron vector of $Q(G)$. Let $G'=G-\{vw_i\mid i=1,2,\ldots,t\}+\{uw_i\mid i=1,2,\ldots,t\}$.  If $x_u\ge x_v$ then $q(G)<q(G')$.
\end{lem}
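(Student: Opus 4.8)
The plan is to exploit the Rayleigh-quotient characterization of the $Q$-index together with the edge form of the signless Laplacian quadratic form. Recall that for any vector $\mathbf{y}$ one has the identity $\mathbf{y}^{T}Q(G)\mathbf{y}=\sum_{ab\in E(G)}(y_a+y_b)^2$, and since $Q(G)$ is symmetric, $q(G)=\max_{\|\mathbf{y}\|=1}\mathbf{y}^{T}Q(G)\mathbf{y}$, with the maximum attained exactly on the eigenvectors belonging to $q(G)$. First I would fix the Perron vector $\mathbf{x}$ of $Q(G)$, normalised so that $\|\mathbf{x}\|=1$; because $G$ is connected, the Perron--Frobenius theorem guarantees $x_a>0$ for every vertex $a$, and $\mathbf{x}^{T}Q(G)\mathbf{x}=q(G)$.

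Next I would estimate $q(G')$ from below by feeding the same vector $\mathbf{x}$ into the Rayleigh quotient for $Q(G')$. The graphs $G$ and $G'$ differ only in the $t$ edges relocated from $v$ to $u$, so the edge-form identity yields
\[
\mathbf{x}^{T}Q(G')\mathbf{x}-\mathbf{x}^{T}Q(G)\mathbf{x}
=\sum_{i=1}^{t}\Big[(x_u+x_{w_i})^2-(x_v+x_{w_i})^2\Big]
=\sum_{i=1}^{t}(x_u-x_v)\,(x_u+x_v+2x_{w_i}).
\]
Since $x_u\ge x_v$ and all entries of $\mathbf{x}$ are positive, every summand is nonnegative, whence $q(G')\ge\mathbf{x}^{T}Q(G')\mathbf{x}\ge q(G)$.

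It remains to upgrade this to the strict inequality claimed, and this borderline bookkeeping is the only delicate point. If $x_u>x_v$ the displayed sum is strictly positive (here $t\ge1$ is used), so $q(G')\ge\mathbf{x}^{T}Q(G')\mathbf{x}>q(G)$ at once. The tight case $x_u=x_v$ needs an eigenvector argument: here $\mathbf{x}^{T}Q(G')\mathbf{x}=q(G)$, so if one had $q(G')=q(G)$ then $\mathbf{x}$ would attain the maximum of the Rayleigh quotient for $Q(G')$ and hence be an eigenvector of $Q(G')$ belonging to $q(G')=q(G)$. I would rule this out by inspecting the $u$-th coordinate of $Q(G')\mathbf{x}$: relocating the edges raises the degree of $u$ by $t$ and adjoins $w_1,\dots,w_t$ to its neighbourhood, so $(Q(G')\mathbf{x})_u=(Q(G)\mathbf{x})_u+t\,x_u+\sum_{i=1}^{t}x_{w_i}=q(G)x_u+t\,x_u+\sum_{i=1}^{t}x_{w_i}>q(G)x_u$. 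Thus $\mathbf{x}$ fails the eigenvector equation for $Q(G')$ at the eigenvalue $q(G)$, a contradiction, forcing $q(G')>q(G)$. In either case the conclusion follows.
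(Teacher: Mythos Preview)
The paper does not prove this lemma; it is quoted from \cite{Hong} without proof. Your argument is the standard one and is correct: the edge form $\mathbf{y}^{T}Q(G)\mathbf{y}=\sum_{ab\in E(G)}(y_a+y_b)^2$ reduces the comparison to a sum of nonnegative terms, and the tight case $x_u=x_v$ is disposed of by noting that equality in the Rayleigh quotient for $Q(G')$ would force $\mathbf{x}$ to be a $q(G)$-eigenvector of $Q(G')$, which is impossible because $(Q(G')\mathbf{x})_u$ strictly exceeds $q(G)x_u$. One small point worth making explicit is that the hypothesis $w_i\in N_G(v)\setminus N_G(u)$ together with the simple-graph convention forces $u\notin\{w_1,\dots,w_t\}$, so the new edges $uw_i$ are genuine non-loop edges and your degree computation $d_{G'}(u)=d_G(u)+t$ is justified; otherwise the proof is complete as written.
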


The following lemma gives an interesting transformation that could increase the $Q$-index.  As usual,  we call it as the `quadrangle' principle,
which is a useful tool in our proofs.
\begin{lem}[\cite{C2}]\label{lem-four-compare}
Let $G'$ be a graph obtained from a connected graph $G$ by a local switching of edges $ab$
and $cd$ to the positions of non-edges $ad$ and $bc$. Let $\mathbf{x}$ be the Perron vector of $Q(G)$. If $(x_a-x_c)(x_d-x_b)>0$ then $q(G')> q(G)$.
\end{lem}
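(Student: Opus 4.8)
The plan is to prove this via the Rayleigh-quotient characterization of the $Q$-index together with the edge form of the signless Laplacian quadratic form, evaluating the form of $G'$ at the Perron vector of $G$. First I would recall the standard identity that for any vector $\mathbf{y}$ indexed by $V(G)$,
\[
\mathbf{y}^{\top}Q(G)\mathbf{y}=\sum_{uv\in E(G)}(y_u+y_v)^2,
\]
and that, since $Q(G)$ is symmetric and nonnegative, its largest eigenvalue admits the variational description $q(G)=\max_{\mathbf{y}\ne\mathbf{0}}\frac{\mathbf{y}^{\top}Q(G)\mathbf{y}}{\mathbf{y}^{\top}\mathbf{y}}$, attained at the Perron vector $\mathbf{x}$, which is a unit eigenvector satisfying $Q(G)\mathbf{x}=q(G)\mathbf{x}$.

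Next I would check that the switching is well defined and compute the effect on the quadratic form. Because $ab$ and $cd$ are edges while $ad$ and $bc$ are non-edges of the simple graph $G$, the four vertices $a,b,c,d$ are pairwise distinct (for instance $a=d$ would force a loop, and $a=c$ would make the pair $cd=ad$ simultaneously an edge and a non-edge), so $G'=G-\{ab,cd\}+\{ad,bc\}$ is again a simple graph on the same vertex set, and the edge sets $E(G)$ and $E(G')$ agree outside these four pairs. Evaluating both edge forms at the Perron vector $\mathbf{x}$ of $G$ and subtracting, every common edge cancels and only the four changed squared terms survive:
\begin{align*}
\mathbf{x}^{\top}Q(G')\mathbf{x}-\mathbf{x}^{\top}Q(G)\mathbf{x}
&=(x_a+x_d)^2+(x_b+x_c)^2-(x_a+x_b)^2-(x_c+x_d)^2\\
&=2(x_a-x_c)(x_d-x_b).
\end{align*}

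The hypothesis $(x_a-x_c)(x_d-x_b)>0$ then yields $\mathbf{x}^{\top}Q(G')\mathbf{x}>\mathbf{x}^{\top}Q(G)\mathbf{x}=q(G)\,\mathbf{x}^{\top}\mathbf{x}=q(G)$, the last equality using that $\mathbf{x}$ is a unit vector. Applying the Rayleigh bound to $G'$ with the test vector $\mathbf{x}$ finally gives $q(G')\ge\mathbf{x}^{\top}Q(G')\mathbf{x}>q(G)$, as claimed. The computation is short, so there is no deep obstacle here; the only points demanding care are the algebraic factorization of the four-term difference into $(x_a-x_c)(x_d-x_b)$ and the verification that the four vertices are distinct so that the local switching genuinely produces a simple graph (with the same degree sequence, which is why the diagonal of $Q$ is unaffected). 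The conceptual crux, and the reason a strict inequality follows without any knowledge of the Perron vector of $G'$ itself, is the standard device of testing the Rayleigh quotient of $G'$ against the eigenvector of $G$.
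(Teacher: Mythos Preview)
Your proof is correct, but note that the paper does not actually supply a proof of this lemma: it is quoted from Cvetkovi\'{c}, Rowlinson and Simi\'{c} \cite{C2} and used as a black box. Your argument via the edge form $\mathbf{y}^{\top}Q(G)\mathbf{y}=\sum_{uv\in E(G)}(y_u+y_v)^2$ and the Rayleigh quotient is exactly the standard route to this ``quadrangle'' principle, and the algebraic factorization
\[
(x_a+x_d)^2+(x_b+x_c)^2-(x_a+x_b)^2-(x_c+x_d)^2=2(x_a-x_c)(x_d-x_b)
\]
is clean and correctly verified. Your side remark that the switching preserves the degree sequence (so the diagonal part of $Q$ is unchanged and only the four cross terms in the edge sum differ) is a good sanity check, and the observation that $a,b,c,d$ must be pairwise distinct is worth making explicit, as you do. Nothing is missing.
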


%An \emph{internal path} of $G$ is a sequence of  vertices $v_1,v_2,\ldots,v_s$ with $s\ge 2$ such that:\\
%(i) the vertices in the sequence are distinct (except possibly $v_1=v_s$),\\
%(ii) $v_i$ is adjacent to $v_{i+1}$ ($i=1,2,\ldots,s-1$),\\
%(iii) the vertex degrees satisfy $d(v_1)\ge3$, $d(v_2)=\cdots=d(v_{s-1})=2$ (unless $s=2$) and $d(v_s)\ge3$.

%\begin{lem}[\cite{Zhai}]\label{shou-suo}
%Let $G$ be a connected graph and $uv$ be an edge on an internal path of $G$ with
%$N_G(u)\cap N_G(v)=\emptyset$. If we contract $uv$, that is, delete $uv$ and identify $u$, $v$ as a new
%vertex $u^*$, and denote the new graph by $G^{uv}$, then $q(G^{uv})>q(G)$.
%\end{lem}
%
%

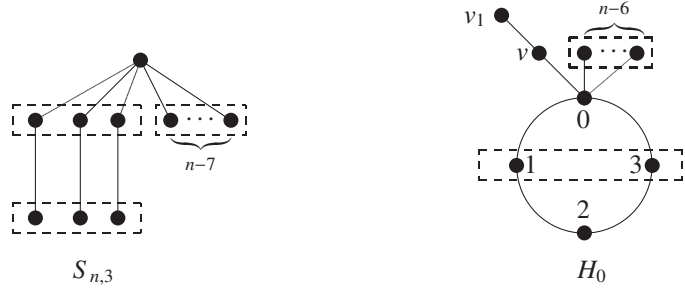
\begin{figure}
  \centering
  \footnotesize
 % This is a LaTeX picture output by TeXCAD.
% File name: [Clipboard].
% Version of TeXCAD: 4.3
% Reference / build: 30-Jun-2012 (rev. 105)
% This is a LaTeX picture output by TeXCAD.
% File name: [Clipboard].
% Version of TeXCAD: 4.3
% Reference / build: 30-Jun-2012 (rev. 105)
% For new versions, check: http://texcad.sf.net/
% Options on the following lines.
%\grade{\on}
%\emlines{\off}
%\epic{\off}
%\beziermacro{\on}
%\reduce{\on}
%\snapping{\on}
%\pvinsert{% Your \input, \def, etc. here}
%\quality{8.000}
%\graddiff{0.005}
%\snapasp{1}
%\zoom{9.5137}
% This is a LaTeX picture output by TeXCAD.
% File name: [Clipboard].
% Version of TeXCAD: 4.3
% Reference / build: 30-Jun-2012 (rev. 105)
% For new versions, check: http://texcad.sf.net/
% Options on the following lines.
%\grade{\on}
%\emlines{\off}
%\epic{\off}
%\beziermacro{\on}
%\reduce{\on}
%\snapping{\on}
%\pvinsert{% Your \input, \def, etc. here}
%\quality{8.000}
%\graddiff{0.005}
%\snapasp{1}
%\zoom{9.5137}
\unitlength 1mm % = 2.845pt
\linethickness{0.4pt}
\ifx\plotpoint\undefined\newsavebox{\plotpoint}\fi % GNUPLOT compatibility
\begin{picture}(89,35.944)(0,0)
\put(76,23.944){\circle*{2}}
\put(76,29.944){\line(0,-1){6}}
%\emline(83,29.944)(76,23.944)
\multiput(83,29.944)(-.039325843,-.033707865){178}{\line(-1,0){.039325843}}
%\end
\put(76,29.944){\circle*{2}}
\put(83,29.944){\circle*{2}}
%\emline(76,22.944)(75,23.944)
\multiput(76,22.944)(-.0333333,.0333333){30}{\line(0,1){.0333333}}
%\end
\put(76,23.944){\line(-1,1){11}}
\put(70,29.944){\circle*{2}}
\put(65,34.944){\circle*{2}}
\put(78,30){$\ldots$}
\put(75,20){$0$}
\put(68,14){$1$}
\put(75,8){$2$}
\put(82,14){$3$}
\put(67,29){$v$}
\put(60,34){$v_1$}
\put(76,32){$\overbrace{}^{n-6}$}
\put(75,0){$H_0$}
\put(76,6){\circle*{2}}
\put(67,15){\circle*{2}}
\put(85,15){\circle*{2}}
\put(17,29){\circle*{2}}
%\emline(17,29)(3,21)
\multiput(17,29)(-.058823529,-.033613445){238}{\line(-1,0){.058823529}}
%\end
\put(3,21){\circle*{2}}
\put(3,21){\line(0,-1){13}}
\put(3,8){\circle*{2}}
\put(17,29){\line(-1,-1){8}}
\put(9,21){\circle*{2}}
\put(9,21){\line(0,-1){13}}
\put(9,8){\circle*{2}}
%\emline(17,29)(14,21)
\multiput(17,29)(-.03370787,-.08988764){89}{\line(0,-1){.08988764}}
%\end
\put(14,21){\circle*{2}}
\put(14,21){\line(0,-1){13}}
\put(14,8){\circle*{2}}
\put(17,29){\line(1,-2){4}}
\put(17,29){\line(3,-2){12}}
\put(21,21){\circle*{2}}
\put(29,21){\circle*{2}}
\put(23,21){$\ldots$}
\put(21,19){$\underbrace{}_{n-7}$}
\put(8,0){$S_{n,3}$}
\put(62,13){\dashbox{1}(27,4)[cc]{}}
\put(74,28){\dashbox{1}(11,4)[cc]{}}
\put(19,19){\dashbox{1}(12,4)[cc]{}}
\put(0,19){\dashbox{1}(17,4)[cc]{}}
\put(0,6){\dashbox{1}(17,4)[cc]{}}
\put(76,15){\circle{18}}
\end{picture}
\caption{\footnotesize The equitable partition of $S_{n,3}$ and $H_0$}\label{partition}
\end{figure}

The equitable partition is a significant tool for graph spectral theory.
The largest eigenvalue of the quotient matrix corresponding to an equitable partition of matrix $M$ is the largest eigenvalue of $M$ (see \cite{Brouwer}, Lemma 2.3.1).  Let $S_{n,3}$ be a graph on $n$ vertices obtained from $K_{1,n-7}$ by attaching  three pendant paths of length $2$ at the center vertex of   $K_{1,n-7}$, and $H_0$ be an unicycle graph of order $n$ and girth $4$ (see Fig. \ref{partition}).
It is routine to verify that $\phi_1(x,n)$ and $\phi_2(x,n)$ in Lemma \ref{Gv-subgraph} are respectively  the characteristic polynomial of the quotient matrix of $S_{n,3}$ and $H_0$, naturally we have the following result.
\begin{lem}\label{Gv-subgraph}
(i). The $Q$-index of   $S_{n,3}$ is the largest root of
$\phi_1(x,n)=x^3 - nx^2 + (3n-8)x - n$.\\
(ii). The $Q$-index of $H_0$ is the largest root of
$$\phi_2(x,n)=x^5 - ( n +5)x^4 + (7n + 1)x^3 - (15n-17)x^2 + (10n - 8)x - 2n.$$
\end{lem}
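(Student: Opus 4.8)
The plan is to handle both parts by the same routine: exhibit an explicit equitable partition of the vertex set, write down the associated quotient matrix of $Q$, compute its characteristic polynomial, and recognise the stated $\phi_i$ as the nontrivial factor. By the property of equitable partitions recalled just before the statement (\cite{Brouwer}, Lemma~2.3.1), the $Q$-index equals the largest eigenvalue of the quotient matrix, so it only remains to pin down which root of that polynomial it is.

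For (i), I would partition $V(S_{n,3})$ into four cells: the centre $c$ of $K_{1,n-7}$; the $n-7$ pendant neighbours of $c$; the three interior (degree-$2$) vertices of the three attached paths of length $2$; and the three pendant end-vertices of those paths. In each cell the vertices share a common degree and a common number of neighbours in every cell, so the partition is equitable, with quotient matrix $$B_1=\begin{pmatrix} n-4 & n-7 & 3 & 0\\ 1 & 1 & 0 & 0\\ 1 & 0 & 2 & 1\\ 0 & 0 & 1 & 1\end{pmatrix}.$$ A cofactor expansion along the last column gives $\det(xI-B_1)=x\bigl(x^{3}-nx^{2}+(3n-8)x-n\bigr)=x\,\phi_1(x,n)$; the extra root $0$ is forced because the tree $S_{n,3}$ is bipartite, so $0$ is an eigenvalue of $Q(S_{n,3})$. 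Since $S_{n,3}$ is connected and has an edge, $q(S_{n,3})>0$; hence $q(S_{n,3})$, being the largest eigenvalue of $B_1$, must be the largest root of $\phi_1(x,n)$.

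For (ii), write the $4$-cycle of $H_0$ as $0123$, with the $n-6$ pendant edges and the path $0vv_1$ attached at the vertex $0$, and use the six cells $\{0\}$, the set of $n-6$ pendant neighbours of $0$, $\{1,3\}$, $\{2\}$, $\{v\}$, $\{v_1\}$. The only cells of size larger than one are the pendant neighbours of $0$ and the pair $\{1,3\}$, whose two members are each adjacent to exactly $0$ and $2$; hence the partition is equitable, with quotient matrix $$B_2=\begin{pmatrix} n-3 & n-6 & 2 & 0 & 1 & 0\\ 1 & 1 & 0 & 0 & 0 & 0\\ 1 & 0 & 2 & 1 & 0 & 0\\ 0 & 0 & 2 & 2 & 0 & 0\\ 1 & 0 & 0 & 0 & 2 & 1\\ 0 & 0 & 0 & 0 & 1 & 1\end{pmatrix}.$$ Expanding this determinant — for instance along the second row, which has only two nonzero entries — should yield $\det(xI-B_2)=x\,\phi_2(x,n)$ with $\phi_2$ exactly as stated, the factor $x$ again reflecting that $H_0$ is bipartite. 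As in (i), $q(H_0)>0$ then forces it to be the largest root of $\phi_2(x,n)$.

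The only laborious step is the two determinant evaluations; they are elementary but error-prone, so I would guard them with consistency checks: the subleading coefficient of each characteristic polynomial against $-\mathrm{tr}(B_i)$ (here $\mathrm{tr}(B_1)=n$ and $\mathrm{tr}(B_2)=n+5$), the vanishing of the constant term against bipartiteness, and a numerical test at one small admissible value of $n$. No real conceptual obstacle remains once the equitable-partition fact is invoked: that fact is exactly what guarantees that the $Q$-index is the dominant eigenvalue of $B_i$, while the leftover linear factor contributes only the dominated eigenvalue $0$.
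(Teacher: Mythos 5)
Your proposal is correct and follows exactly the route the paper intends: the dashed boxes in Fig.~2 indicate precisely your four-cell partition of $S_{n,3}$ and your six-cell partition of $H_0$, your quotient matrices are right, and the determinants do come out to $x\,\phi_1(x,n)$ and $x\,\phi_2(x,n)$, with your argument that $q>0$ correctly discarding the extra root $0$. If anything, your write-up is more precise than the paper's one-line justification, which calls $\phi_i$ ``the characteristic polynomial of the quotient matrix'' even though the quotient matrices are $4\times 4$ and $6\times 6$, so $\phi_i$ is really the nontrivial factor after removing the factor $x$, exactly as you say.
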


Let $\mathbf{x}$ be the Perron vector of $Q(G)$ with respect to $q(G)$ and $x_u$   the  coordinate of $\mathbf{x}$ corresponds to vertex $u$ ($u\in V(G)$).
By the  eigenvalue equation $Q(G)\mathbf{x}=q(G)\mathbf{x}$,
we have $q(G)x_u=d(u)x_u+\sum_{v\in N_G(u)}x_v$. Thus we can get a upper bound of $x_u$ for any $u\in V(G)$,  which can be used to  compare the coordinates of two vertices.
\begin{lem}\label{entry-upbound}
Let $G$ be a connected graph.
For any $u\in V(G)$,
 we have
$$x^2_u\le \frac{1}{1+\frac{(q(G)-d(u))^2}{d(u)}}.$$
\end{lem}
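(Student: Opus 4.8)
The plan is to start from the eigenvalue equation for $Q(G)$ at the vertex $u$ and then apply the Cauchy--Schwarz inequality together with the normalization of the Perron vector. Recall that, as noted just before the statement, the equation $Q(G)\mathbf{x}=q(G)\mathbf{x}$ gives
$q(G)x_u=d(u)x_u+\sum_{v\in N_G(u)}x_v$, hence
$(q(G)-d(u))\,x_u=\sum_{v\in N_G(u)}x_v$.

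First I would square both sides and bound the right-hand side by Cauchy--Schwarz:
$$(q(G)-d(u))^2x_u^2=\Big(\sum_{v\in N_G(u)}x_v\Big)^2\le d(u)\sum_{v\in N_G(u)}x_v^2.$$
Next, since $G$ is simple (so $u\notin N_G(u)$) and $\mathbf{x}$ is a unit vector with nonnegative entries, we have $\sum_{v\in N_G(u)}x_v^2\le \sum_{w\in V(G)}x_w^2-x_u^2=1-x_u^2$. Combining the two displays yields
$(q(G)-d(u))^2x_u^2\le d(u)\,(1-x_u^2)$, and moving the $x_u^2$ terms to one side gives
$x_u^2\big((q(G)-d(u))^2+d(u)\big)\le d(u)$. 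Dividing by the (positive) bracket and then by $d(u)$ produces exactly the claimed bound
$x_u^2\le \dfrac{1}{1+\frac{(q(G)-d(u))^2}{d(u)}}$.

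There is no real obstacle here; the only points to be careful about are that $d(u)\ge 1$ since $G$ is connected (so the divisions are legitimate and $N_G(u)$ is nonempty), and that the inequality $\sum_{v\in N_G(u)}x_v^2\le 1-x_u^2$ requires the absence of loops so that $u$ itself is not double-counted. The argument is uniform in $u$, so the statement holds for every vertex.
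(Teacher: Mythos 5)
Your proposal is correct and is essentially the paper's own argument: the same eigenvalue equation, the same Cauchy--Schwarz step on $\sum_{v\in N_G(u)}x_v$, and the same use of the normalization $\sum_{w\in V(G)}x_w^2=1$ together with $u\notin N_G(u)$ (the paper writes this as $1\ge x_u^2+\sum_{v\in N_G(u)}x_v^2$ rather than $\sum_{v\in N_G(u)}x_v^2\le 1-x_u^2$, but the algebra is identical). Nothing is missing.
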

\begin{proof}
From the Cauchy-Schwarz inequality, we have
$$d(u)\sum_{v\in N_G(u)}x^2_v=\sum_{v\in N_G(u)}1^2\sum_{v\in N_G(u)}x^2_v\ge \left(\sum_{v\in N_G(u)}x_v\right)^2=\left( (q(G)-d(u))\cdot x_u \right)^2,$$
which implies
$$\sum_{v\in N_G(u)}x^2_v\ge\frac{\left((q(G)-d(u)\right)^2 x_u^2}{d(u)}.$$
Thus,
$$1=\sum_{i\in V(G)}x^2_i\ge x_u^2+\sum_{v\in N_G(u)}x^2_v\ge x_u^2+\frac{((q(G)-d(u))^2 x_u^2}{d(u)}=\left(1+\frac{((q(G)-d(u))^2}{d(u)}\right)x_u^2,$$
which follows
the result.
\end{proof}

\begin{lem}[\cite{Feng}]\label{bound}
Let $G$ be a connected graph. Then
$$q(G) \le \max_{u \in V (G)}\left\{d(u) +\frac{\sum_{v\in N_G(u)} d(v)}{d(u)} \right\},$$ with equality if and only if $G$ is either a semiregular bipartite graph or a regular graph.
\end{lem}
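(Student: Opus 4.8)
The plan is to realize the right-hand side as the maximum row sum of a nonnegative matrix similar to $Q(G)$, and then to extract the equality case from Perron--Frobenius theory together with a short degree-counting argument. Throughout I would write $s(u)=d(u)+\frac{\sum_{v\in N_G(u)}d(v)}{d(u)}$, and note that $d(u)\ge 1$ for every $u$ since $G$ is connected, so the construction below is well defined.

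First I would introduce the matrix $M=D(G)^{-1}Q(G)D(G)$. Because $D(G)$ is a positive diagonal matrix, $M$ is similar to $Q(G)$ and hence has the same eigenvalues; in particular its spectral radius equals $q(G)$. Expanding $M=D(G)^{-1}(D(G)+A(G))D(G)=D(G)+D(G)^{-1}A(G)D(G)$ shows that $M$ is entrywise nonnegative, that its off-diagonal zero/nonzero pattern coincides with that of $A(G)$ (so $M$ is irreducible, as $G$ is connected), and that the $u$-th row sum of $M$ equals exactly $d(u)+\frac{1}{d(u)}\sum_{v\in N_G(u)}d(v)=s(u)$, since the off-diagonal $(u,v)$ entry is $\frac{d(v)}{d(u)}$ whenever $v\in N_G(u)$ and the diagonal entry is $d(u)$. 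Invoking the standard fact that the spectral radius of a nonnegative matrix is at most its maximum row sum then yields $q(G)=\rho(M)\le\max_{u\in V(G)}s(u)$, which is the claimed inequality.

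For the equality case I would use the sharper statement that for an irreducible nonnegative matrix the spectral radius equals the maximum row sum if and only if all row sums coincide. Thus equality forces $s(u)=c$ (a constant) for every $u\in V(G)$, equivalently $d(u)^2+\sum_{v\in N_G(u)}d(v)=c\,d(u)$ for all $u$. It then remains to show that this constancy characterizes exactly the regular and the semiregular bipartite graphs. Evaluating the identity at a vertex of maximum degree $\Delta$ and at a vertex of minimum degree $\delta$, and using $\delta\le d(v)\le\Delta$ for each neighbor $v$, pins down $c=\Delta+\delta$; feeding this value back shows that every neighbor of a $\Delta$-vertex has degree $\delta$ and every neighbor of a $\delta$-vertex has degree $\Delta$. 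If $\Delta=\delta$ then $G$ is regular and a direct check gives $s(u)\equiv 2\Delta$. If $\Delta>\delta$, a breadth-first traversal from a maximum-degree vertex, alternating between degree-$\Delta$ and degree-$\delta$ vertices, shows by connectivity that every vertex has degree $\Delta$ or $\delta$ and that the two classes form the parts of a bipartition; hence $G$ is semiregular bipartite, and there $s(u)\equiv\Delta+\delta$. The converse computations for the two families are immediate.

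I expect the only genuinely delicate point to be the equality analysis: first the correct invocation of the equal-row-sums criterion for irreducible nonnegative matrices, and then the bookkeeping that upgrades the two boundary identities (at a $\Delta$-vertex and at a $\delta$-vertex) into the global bipartite-semiregular structure. The inequality itself is essentially a one-line consequence of the similarity $M=D(G)^{-1}Q(G)D(G)$ once its row sums have been computed.
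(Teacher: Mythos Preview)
The paper does not prove this lemma at all; it is quoted from \cite{Feng} and used as a black box. Your argument is correct and is essentially the standard proof of the result: the similarity $M=D(G)^{-1}Q(G)D(G)$ has row sums exactly $s(u)$, which gives the inequality, and the Perron--Frobenius equal-row-sums criterion combined with your $\Delta$/$\delta$ bookkeeping handles the equality case cleanly. One small point worth making explicit in your write-up is why the BFS levels form a genuine bipartition: an edge inside an even level would join two $\Delta$-vertices, contradicting that every neighbor of a $\Delta$-vertex has degree $\delta<\Delta$, and symmetrically for odd levels; you allude to this but it deserves a sentence.
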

We now
 give an upper and lower bound of $Q$-index by Lemma \ref{bound},   from which we can  deduce three useful  corollaries for the later use.
\begin{thm}\label{outG-0}
For any connected $G$ with size $m\ge5$,  we have \\
(i) if $\Delta(G )\leq s$ and  $s\geq \frac{2m}{3}$,  then  $q(G)\le s+2$.\\
(ii) if  $s\leq\Delta(G ) \leq m-1$, then $q(G)> s+1$.
\end{thm}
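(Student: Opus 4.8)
The plan is to apply Lemma \ref{bound} directly and estimate the quantity $d(u)+\frac{\sum_{v\in N_G(u)}d(v)}{d(u)}$ uniformly over all vertices $u$. For part (i), fix a vertex $u$ and write $d=d(u)\le s$. Since $G$ has $m$ edges, the sum of degrees over all neighbors of $u$ is at most $m+d-1$: each edge incident with a neighbor of $u$ is counted, the $d$ edges from $u$ to $N_G(u)$ are counted once each, and the remaining at most $m-d$ edges of $G$ contribute at most one endpoint in $N_G(u)$ apiece, but one has to be slightly careful — a cleaner bound is $\sum_{v\in N_G(u)}d(v)\le 2m - d$ wastefully, or more sharply $\sum_{v\in N_G(u)}d(v)\le m + \binom{d}{2}\cdot\frac{1}{?}$; I would instead use the crude but sufficient estimate $\sum_{v\in N_G(u)}d(v)\le m+d-1+ (\text{number of edges inside }N_G(u))$. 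The key inequality I would actually push through is
\[
d(u)+\frac{\sum_{v\in N_G(u)}d(v)}{d(u)} \le d + \frac{2m-d}{d},
\]
and then show $f(d):=d+\frac{2m-d}{d}=d-1+\frac{2m}{d}\le s+2$ for all $1\le d\le s$ when $s\ge \frac{2m}{3}$. The function $f(d)$ is convex in $d$, so its maximum over the interval $[1,s]$ is attained at an endpoint; at $d=s$ one gets $s-1+\frac{2m}{s}\le s-1+\frac{2m}{2m/3}=s+2$, and at $d=1$ one gets $2m$, which forces an extra argument ruling out degree-one vertices being the bottleneck. Here one uses that if $d(u)=1$ then its single neighbor $v$ has $d(v)\le m$, so $f$ evaluated via the true sum is $1+d(v)\le 1+m$, and separately one checks $1+m \le s+2$ is not automatic — so the honest route is to bound $\sum_{v\in N_G(u)}d(v)$ by $m+d-1$ when $N_G(u)$ is independent and handle the general case by noting edges inside $N_G(u)$ are few; I expect this bookkeeping to be the main technical obstacle, though entirely elementary.

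For part (ii), the lower bound, I would exhibit a single vertex witnessing $d(u)+\frac{\sum_{v\in N_G(u)}d(v)}{d(u)}> s+1$ is \emph{not} what we need — rather, Lemma \ref{bound} gives an upper bound, so for the lower bound on $q(G)$ I would instead use monotonicity under subgraphs (Lemma \ref{lem-subgraph-radius}) together with a direct computation: if $\Delta(G)\ge s$, pick a vertex $u$ of degree $\ge s$; then $G$ contains $K_{1,\Delta(u)}\supseteq K_{1,s}$ as a subgraph, and since $\Delta(u)\le m-1$ there is at least one more edge of $G$ not incident to $u$ attached to the star, so $G$ contains a proper supergraph of $K_{1,s}$ on the same vertex set — concretely $K_{1,s}$ with one pendant edge or one extra edge among the leaves. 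A short estimate via the eigenvalue equation, testing the Perron-type vector that puts weight $a$ on $u$ and $b$ on the leaves, yields $q(K_{1,s})=s+1$ and any proper connected supergraph gives strictly more by Lemma \ref{lem-subgraph-radius}. The only subtlety is guaranteeing the extra edge genuinely lies in $G$ and is incident to the chosen star; since $m\ge 5$ and $s\le m-1$ there is room, and if the high-degree vertex absorbs all but one edge that last edge is still attached to a leaf, giving the needed proper subgraph.

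I would then record the three corollaries (not shown in the excerpt) as immediate consequences by plugging $s=\lfloor 2m/3\rfloor$ or the specific degree thresholds arising in the ordering arguments. The overall difficulty is low: the heart is the convexity/endpoint analysis of $d-1+\frac{2m}{d}$ on $[1,s]$ in part (i) and the careful-but-routine choice of proper subgraph in part (ii). The one place to be attentive is not over-claiming in part (i) for vertices of very small degree — there one should fall back on the exact expression $d(u)+\frac{\sum_{v\in N_G(u)}d(v)}{d(u)}$ rather than the blanket bound $d-1+\frac{2m}{d}$, observing that a vertex of degree $1$ or $2$ in a graph with a vertex of degree $\le s$ cannot make the quotient exceed $s+2$ once one accounts for the total number of edges; making that last observation clean is, I expect, the only real obstacle.
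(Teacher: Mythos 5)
Your part (ii) is essentially the paper's argument and is correct once trimmed: since $s\le\Delta(G)\le m-1<m=|E(G)|$, the star $K_{1,s}$ is automatically a \emph{proper} subgraph of $G$, so Lemma \ref{lem-subgraph-radius} gives $q(G)>q(K_{1,s})=s+1$ immediately; no discussion of where the extra edge sits, and no Perron test vector, is needed. Your part (i) also follows the paper's strategy (Lemma \ref{bound}, the estimate $\sum_{v\in N_G(u)}d(v)\le 2m-d(u)$, and the endpoint analysis of $x\mapsto x+\frac{2m}{x}$ using $s\ge\frac{2m}{3}$ and $\frac{2m}{3}\ge\sqrt{2m}$ for $m\ge 5$), and that analysis is sound for $d(u)\ge 3$.

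The genuine gap is in the low-degree cases of part (i), which you correctly identify as the obstacle but do not close. For $d(u)=1$ you bound the neighbor's degree by $m$, and your fallback suggestions (bounding $\sum_{v\in N_G(u)}d(v)$ by $m+d-1$, or ``accounting for the total number of edges'') still only yield a quotient of about $1+m$, which exceeds $s+2$ whenever $s<m-1$; the same failure occurs at $d(u)=2$, where $d-1+\frac{2m}{d}=m+1$. Edge counting cannot rescue these cases. The correct and one-line fix, which is what the paper does, is to invoke the hypothesis $\Delta(G)\le s$ on the \emph{neighbors}: for any $u$,
\[
d(u)+\frac{\sum_{v\in N_G(u)}d(v)}{d(u)}\ \le\ d(u)+\frac{d(u)\,\Delta(G)}{d(u)}\ =\ d(u)+\Delta(G)\ \le\ d(u)+s,
\]
which is at most $s+2$ whenever $d(u)\le 2$. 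With that, the convexity/endpoint argument on $[3,s]$ handles the remaining degrees and part (i) is complete.
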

\begin{proof}
Let $z\in V(G)$  such that
$$d(z) +\frac{\sum_{v\in N_G(z)} d(v)}{d(z)}=
\max_{u \in V (G)}\left\{d(u) +\frac{\sum_{v\in N_G(u)} d(v)}{d(u)}\right\}.$$
If  $d(z)=1$,  by Lemma \ref{bound},  we have
$q(G)\leq d(z) +\frac{\sum_{v\in N_G(z)} d(v)}{d(z)}\le1+\Delta(G) \le s+1.$
It follows the result.
If  $d(z)=2$,  by Lemma \ref{bound} we get
$q(G)\leq d(z) +\frac{\sum_{v\in N_G(z)} d(v)}{d(z)}\le 2+\Delta(G) \le s+2.$
It follows the result.
Next we consider the case of $d(z)\geq 3$.
Note that
\begin{eqnarray}\label{u-bound}
q(G)\leq d(z) +\frac{\sum_{v\in N_G(z)} d(v)}{d(z)}\le
d(z) +\frac{2m-d(z)}{d(z)}=d(z) +\frac{2m}{d(z)}-1.\end{eqnarray}
Clearly, $3\le d(z)\le \Delta(G)\le s$.
Let $f(x)=x+\frac{2m}{x}$.
Then $f(x)$ is decreased in the internal $[3,\sqrt{2m}]$ and increased in the internal $[\sqrt{2m}, +\infty)$.
Since   $s\ge \frac{2m}{3}$,  also noticed that $f(3)=f(\frac{2m}{3})$ and
$\frac{2m}{3}>\sqrt{2m}$ due to $m\ge5$,
we have
\begin{eqnarray}\label{u-function}
d(z) +\frac{2m}{d(z)}-1\le s+\frac{2m}{s}-1\le s+\frac{2m}{\frac{2m}{3}}-1=s+2.
\end{eqnarray}
Combining  (\ref{u-bound}) and (\ref{u-function}), we have the first result.

If  $s\leq\Delta(G ) \le m-1$, then $G$ has a $K_{1, s}$ as a proper subgraph, and so $q(G)>q(K_{1, s})=s+1$ from Lemma \ref{lem-subgraph-radius}.
It follows the results.
\end{proof}

Recall that   $\mathbb{G}(m,\geq g)$ is the set of connected graphs on $m$ edges and girth  no less than $g$, where $g\ge3$, and $G_0\in \mathbb{G}(m, g)$ is a graph obtained from $C_g$ by attaching $m-g$ pendant edges to  $0$.
By simple observation,   we see that $G_0$ is the unique graph among  $\mathbb{G}(m, \geq g)$ with maximum degree $\Delta(G_0)=m-g+2$.
Moreover, we have the following result.
\begin{cor}\label{max-graph-girth}
Let $G\in \mathbb{G}(m, \geq g)$ with $m\ge 3g-3$. Then $q(G)\le q(G_0)$, with equality if and only if $G\cong G_0$.
\end{cor}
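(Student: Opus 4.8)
The plan is to argue according to the size of $\Delta(G)$, splitting into the two ranges handled by the two parts of Theorem~\ref{outG-0}. First I would record the elementary structural fact that every $G\in\mathbb{G}(m,\ge g)$ satisfies $\Delta(G)\le m-g+2$. Indeed, let $v$ be a vertex with $d(v)=\Delta(G)$ and let $C$ be a shortest cycle of $G$, of length $\ell\ge g$. If $v\in C$, then $v$ is incident with exactly two edges of $C$, so $G$ has at least $\ell+(d(v)-2)\ge g+d(v)-2$ edges; if $v\notin C$, the $d(v)$ edges at $v$ are disjoint from the $\ell$ edges of $C$, so $m\ge g+d(v)$. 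In either case $d(v)\le m-g+2$, and tracing the equality cases shows that $d(v)=m-g+2$ forces $\ell=g$, forces $m=g+d(v)-2$ (hence no further edges), and forces the $d(v)-2$ neighbours of $v$ off $C$ to be leaves; that is, $G\cong G_0$. This is exactly the ``simple observation'' quoted before the corollary, and it already settles the equality statement (and shows $\Delta(G)>m-g+2$ is impossible).

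Next, suppose $\Delta(G)\le m-g+1$, and set $s=m-g+1$. The hypothesis $m\ge 3g-3$ is equivalent to $3(m-g+1)\ge 2m$, i.e. $s\ge\tfrac{2m}{3}$, and moreover $m\ge 3g-3\ge 6>5$, so Theorem~\ref{outG-0}(i) applies and yields $q(G)\le s+2=m-g+3$. On the other hand $\Delta(G_0)=m-g+2$ satisfies $m-g+2\le m-1$ because $g\ge3$, so Theorem~\ref{outG-0}(ii) applied to $G_0$ with $s'=m-g+2=\Delta(G_0)$ gives $q(G_0)>s'+1=m-g+3$. Combining the two estimates, $q(G)\le m-g+3<q(G_0)$, which is the required strict inequality. (Alternatively, the lower bound $q(G_0)>m-g+3$ follows directly from Lemma~\ref{lem-subgraph-radius}, since $G_0$ properly contains $K_{1,m-g+2}$, whose $Q$-index is $m-g+3$.)

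Finally, if $\Delta(G)=m-g+2$ then by the structural fact above $G\cong G_0$, so $q(G)=q(G_0)$; together with the previous paragraph this exhausts all cases and gives $q(G)\le q(G_0)$ with equality if and only if $G\cong G_0$. I do not expect a serious obstacle: the two delicate points are verifying the numerical hypotheses of Theorem~\ref{outG-0} (which is precisely where, and the only place where, $m\ge 3g-3$ is used), and making the edge-counting argument for the uniqueness of $G_0$ fully rigorous, including the degenerate case in which the maximum-degree vertex does not lie on a shortest cycle.
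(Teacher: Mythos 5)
Your proof is correct and follows essentially the same route as the paper: any $G\not\cong G_0$ has $\Delta(G)\le m-g+1\ge\frac{2m}{3}$, so Theorem~\ref{outG-0}(i) gives $q(G)\le m-g+3$, while Theorem~\ref{outG-0}(ii) applied to $G_0$ (with $\Delta(G_0)=m-g+2\le m-1$) gives $q(G_0)>m-g+3$. The only difference is that you spell out the edge-counting proof of the structural fact that $\Delta(G)\le m-g+2$ with equality forcing $G\cong G_0$, which the paper dismisses as a ``simple observation''; this is a worthwhile addition but not a different method.
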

\begin{proof}
By the definition of $\mathbb{G}(m, \geq g)$,
taking any $G\in  \mathbb{G}(m, \geq g)\setminus \{G_0\}$, we have $\Delta(G) \le m-g+1$.
Note that $m-g+1\geq \frac{2m}{3}$ since $m\ge 3g-3$. By Theorem \ref{outG-0} (i), we have $q(G)\leq m-g+3$.
On the other hand, we know that $m-g+2=\Delta(G_0) \le m-1$ since $g\ge 3$,  By Theorem \ref{outG-0}(ii), we have  $q(G_0)>m-g+3\geq q(G)$.
\end{proof}
Recently, by using different ways Chen, Wang and Zhai in \cite{Zhai} have obtained  the  result of Corollary \ref{max-graph-girth}.
By Theorem \ref{outG-0},  we will give a relation of $Q$-indices of graphs between two distinct girths.
\begin{cor}
Let $G^*$ and $H^*$ respectively be  graph with the maximum $Q$-index in $\mathbb{G}(m, g)$ and $\mathbb{G}(m, g')$.
If $g<g'$ and $m\geq 3g'-3$,
then $q(G^*)>q(H^*)$.
\end{cor}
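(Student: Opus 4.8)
The plan is to first pin down both extremal graphs explicitly and then separate their $Q$-indices using the coarse bounds of Theorem \ref{outG-0}. Since $g<g'$ and $m\ge 3g'-3$, we have $m\ge 3(g+1)-3=3g\ge 3g-3$, so Corollary \ref{max-graph-girth} applies in $\mathbb{G}(m,\ge g)$ and (with $g'$ in place of $g$) in $\mathbb{G}(m,\ge g')$. Because $\mathbb{G}(m,g)\subseteq \mathbb{G}(m,\ge g)$ and the graph $G_{0,g}$ has girth exactly $g$, Corollary \ref{max-graph-girth} forces $G^*\cong G_{0,g}$ and $q(G^*)=q(G_{0,g})$; likewise $H^*\cong G_{0,g'}$ and $q(H^*)=q(G_{0,g'})$. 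So it suffices to prove $q(G_{0,g})>q(G_{0,g'})$.

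Next I would bound $q(G_{0,g})$ from below. The vertex $0$ of $G_{0,g}$ has degree $\Delta(G_{0,g})=m-g+2$, and the star centered at $0$ is a \emph{proper} subgraph (the remaining $g-2\ge 1$ edges form the path $1\,2\cdots(g-2)(g-1)$). Hence by Lemma \ref{lem-subgraph-radius}, $q(G_{0,g})>q(K_{1,\,m-g+2})=m-g+3$. (Alternatively, apply Theorem \ref{outG-0}(ii) with $s=m-g+2$, noting $s\le\Delta(G_{0,g})\le m-1$ since $g\ge 3$.)

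Then I would bound $q(G_{0,g'})$ from above via Theorem \ref{outG-0}(i) applied with $s=\Delta(G_{0,g'})=m-g'+2$. The hypotheses hold: $m\ge 3g'-3\ge 9>5$ (as $g'\ge 4$), and $s\ge\frac{2m}{3}$ is equivalent to $m\ge 3g'-6$, which follows from $m\ge 3g'-3$. Therefore $q(G_{0,g'})\le s+2=m-g'+4$.

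Finally, combining the two estimates and using $g'\ge g+1$,
$$q(G^*)=q(G_{0,g})>m-g+3\ge m-g'+4\ge q(G_{0,g'})=q(H^*),$$
which completes the proof. I do not anticipate a genuine obstacle here; the only points requiring care are the correct identification of $G^*$ and $H^*$ with $G_{0,g}$ and $G_{0,g'}$ (so that Corollary \ref{max-graph-girth} may be invoked on the relevant subfamilies) and the verification that the numerical side conditions of Theorem \ref{outG-0} ($m\ge 5$, $s\ge \tfrac{2m}{3}$, $\Delta\le m-1$) all follow from $m\ge 3g'-3$ together with $3\le g<g'$.
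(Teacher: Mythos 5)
Your proposal is correct and follows essentially the same route as the paper: identify $G^*\cong G_{0,g}$ and $H^*\cong G_{0,g'}$ via Corollary \ref{max-graph-girth}, then separate $q(G_{0,g})>m-g+3\ge m-g'+4\ge q(G_{0,g'})$ using Theorem \ref{outG-0}(ii) and (i) respectively. Your extra care in noting that Corollary \ref{max-graph-girth} is stated for $\mathbb{G}(m,\ge g)$ and restricts to $\mathbb{G}(m,g)$ because $G_{0,g}$ has girth exactly $g$ is a detail the paper glosses over, but the argument is the same.
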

\begin{proof}Since $m\geq 3g'-3$ and $g'>g$, we have $m\geq 3g-3$.
By Corollary \ref{max-graph-girth},  we get that $G^*$ (resp. $H^*$) is isomorphic to an  unicycle graph $C_g$ (resp. $C_{g'}$)
by attaching  $m-g$ (resp. $m-g'$)
pendent edges to the same vertex of the cycle.
Clearly,
$\Delta(G^*)=m-g+2$ and $\Delta(H^*)=m-g'+2$.
Notice that  $\Delta(H^*)=m-g'+2\geq \frac{2m}{3}$ since $m\geq 3g'-6$.
We have $q(H^*)\leq m-g'+4$ from Theorem \ref{outG-0} (i).
Note that
 $\Delta(G^*)=m-g+2\leq m-1$.
By Theorem \ref{outG-0} (ii), we have $q(G^*)>m-g+3\geq m-g'+4 \geq q(H^*)$. It follows the result.
\end{proof}

By Theorem \ref{outG-0}, we can get a relation of $Q$-indices of graphs between two maximum degrees.
\begin{cor}\label{compare-max-degree}
Let $G$ and $H$ be graphs with size $m\ge5$ and   maximum degree $\Delta(G)$ and $\Delta(H)$, respectively.
If $m-1\ge \Delta(G)>\Delta(H) \geq \frac{2m}{3}$,
then $q(G)>q(H)$.
\end{cor}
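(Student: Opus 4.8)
The plan is to sandwich $q(G)$ and $q(H)$ between consecutive integers using the two halves of Theorem~\ref{outG-0}, and then to exploit the fact that maximum degrees are integers, so that the strict inequality $\Delta(G)>\Delta(H)$ forces a gap of at least $1$ between the relevant parameters. Note first that the standing hypothesis $m\ge 5$ is precisely what Theorem~\ref{outG-0} requires, so both parts of that theorem are available.

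First I would bound $q(G)$ from below. Since $\Delta(G)\le\Delta(G)\le m-1$ by hypothesis, taking $s=\Delta(G)$ in Theorem~\ref{outG-0}(ii) gives $q(G)>\Delta(G)+1$. Next I would bound $q(H)$ from above. Taking $s=\Delta(H)$ in Theorem~\ref{outG-0}(i), the two required conditions $\Delta(H)\le s$ and $s\ge\frac{2m}{3}$ hold (the latter being assumed), so $q(H)\le\Delta(H)+2$.

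Finally, because $\Delta(G)$ and $\Delta(H)$ are integers with $\Delta(G)>\Delta(H)$, we have $\Delta(G)\ge\Delta(H)+1$. Chaining the three inequalities yields
$$q(G)>\Delta(G)+1\ge\Delta(H)+2\ge q(H),$$
which is the desired conclusion. There is no substantive obstacle in this argument; the only points that need care are verifying that the parameter $s$ chosen in each invocation of Theorem~\ref{outG-0} meets the stated hypotheses, and observing that the lower bound in part (ii) and the upper bound in part (i) differ by exactly one, so that the integrality of the maximum degree is just enough to separate $q(G)$ from $q(H)$.
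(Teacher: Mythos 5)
Your proposal is correct and follows essentially the same route as the paper: apply Theorem~\ref{outG-0}(ii) with $s=\Delta(G)$ to get $q(G)>\Delta(G)+1$, apply Theorem~\ref{outG-0}(i) with $s=\Delta(H)$ to get $q(H)\le\Delta(H)+2$, and use integrality of the maximum degrees to chain the bounds. The only difference is that you make the step $\Delta(G)\ge\Delta(H)+1$ explicit, which the paper leaves implicit.
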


\begin{proof}
Since $\Delta(G) \le m-1$,  we have $q(G)>\Delta(G)+1\geq \Delta(H)+2$  by Theorem \ref{outG-0}(ii).
On the other hand,  note that $\Delta(H)\geq \frac{2m}{3}$.
By Theorem \ref{outG-0}(i), we have  $q(H)\leq \Delta(H)+2< q(G)$. It follows the result.
\end{proof}

%%%%%%%%%%%%%%%%%%%%%%%%%%%%%%%%%%%%%%%%%%%%%%%%%%%%
\section{The order of  $Q$-indices of graphs in $\mathbb{G}(m,g)$}
%Recall that $\mathbb{G}(m, g)$ is the family of connected graphs on $m$ edges with girth $g$, where $g\geq 3$.
In the section, we will give the order of graphs in $\mathbb{G}(m,g)$ via  their $Q$-indices.
For any $G\in \mathbb{G}(m, g)$,
$C_g=012\cdots(g-2)(g-1)0$ is always denoted by one of a shortest  cycle of $G$.
If $m=g$ then $\mathbb{G}(m,g)=\{C_g\}$. If $m=g+1$ then $\mathbb{G}(m,g)=\{C^+_g\}$, where $C^+_g$ is a graph obtained from $C_g$ by attaching a pendant edge at some vertex of $C_g$.
In what follows, we consider $m\ge g+2$ and the corresponding $|\mathbb{G}(m,g)|\geq 2$.

For $g\ge3$ and $m\ge g+2$,
let $\mathbb{G}_{\Delta}(m, g)$ be the set of graphs in $\mathbb{G}(m, g)$ with maximum degree  $\Delta=m-g+1$.
Recall that
 $G_i\in \mathbb{G}(m, g)$ is  obtained from $C_g$ by attaching $m-g-1$ pendant edges at  vertex $0$ and simultaneously adding a pendant edge at the vertex $i$,  say $iw$,  for $1\le i\le \lfloor\frac{g}{2}\rfloor$ (see Fig.\ref{girth}), and
 $G_v$ is obtained from $C_g$  by attaching, respectively, $m-g-2$  pendent edges  and a $P_3$ at  vertex $0$, where $P_3=0vv_1$ (see Fig.\ref{girth}).

\begin{lem}\label{degree-1}
 $\mathbb{G}_{\Delta}(m, g)=\{G_1, G_2, \ldots, G_{\lfloor\frac{g}{2}\rfloor}, G_v\}$,  where $g\ge4$ and $m\ge g+2$.
\end{lem}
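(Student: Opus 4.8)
The plan is to establish the two inclusions separately. For ``$\supseteq$'' I would check directly that each of $G_1,\dots,G_{\lfloor g/2\rfloor},G_v$ is connected with exactly $m$ edges, that attaching pendant edges or a pendant path of length two to $C_g$ creates no shorter cycle and preserves $C_g$, so the girth stays $g$, and that $d(0)=m-g+1$ is the maximum degree; this places all of them in $\mathbb{G}_\Delta(m,g)$. The substance is the reverse inclusion, which I would approach as follows.

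Fix $G\in\mathbb{G}_\Delta(m,g)$ and a vertex $u$ with $d_G(u)=\Delta=m-g+1$. First I would observe that $G-u$ has exactly $m-(m-g+1)=g-1$ edges, so it contains no cycle: a cycle there would be a cycle of $G$ of length at least the girth $g$, while only $g-1<g$ edges are available. Hence $G-u$ is a forest and every cycle of $G$ runs through $u$.

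Then I would analyse how $u$ is attached to the tree components $T_1,\dots,T_k$ of $G-u$. Write $e_i\ge1$ for the number of edges joining $u$ to $T_i$ and $m_i=|E(T_i)|$, so $\sum_i e_i=m-g+1$ and $\sum_i m_i=g-1$. If $e_i\ge2$, choosing two $u$-neighbours $a,b$ in $T_i$ at minimum tree distance gives a cycle through $u$, $a$, the $a$--$b$ path in $T_i$, and $b$, of length $2+\mathrm{dist}_{T_i}(a,b)\ge g$, whence $m_i\ge g-2$. As $\sum_i m_i=g-1$, at most one component, say $T_1$, carries a cycle, and $g-2\le m_1\le g-1$, all other components being single vertices or (when $m_1=g-2$) a single edge. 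Next I would pin down $T_1$: using $g\ge4$, a third $u$-neighbour in $T_1$ or an $a$--$b$ tree-distance below $g-2$ forces a cycle strictly shorter than $g$, so in fact $e_1=2$, $\mathrm{dist}_{T_1}(a,b)=g-2$, and $T_1$ is a path of length $g-2$ from $a$ to $b$ plus at most one extra pendant edge. Consequently $u$ together with that path is the cycle $C_g$, the remaining $m-g-1$ edges at $u$ are pendant, and the single leftover forest edge either sits as a pendant at the cycle-vertex at distance $j$ from $u$ for some $1\le j\le\lfloor g/2\rfloor$ (giving $G\cong G_j$) or hangs off $u$, turning one pendant edge into a pendant $P_3$ (giving $G\cong G_v$).

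The forward inclusion and the reduction to ``$G-u$ is a forest'' are routine; the real obstacle is the structural analysis of the cyclic component. I expect the delicate points to be: ruling out every non-path tree as the carrier of the girth cycle, ruling out every additional edge from $u$ into $T_1$ or onto the cycle that would produce a shortcut of length below $g$, and then confirming that the only remaining freedom is the position of the unique leftover edge among the $g-1$ forest edges — on the cycle, which yields $G_1,\dots,G_{\lfloor g/2\rfloor}$, or at $u$, which yields $G_v$. Carrying out this leftover-edge bookkeeping while simultaneously keeping all cycle lengths at least $g$ is where the argument requires the most care.
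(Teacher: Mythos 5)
Your route is genuinely different from the paper's: the paper fixes $C_g$, takes the subgraph induced by $V(C_g)\cup N_G(0)$, asserts it has exactly $m-1$ edges, and asks where the one leftover edge can sit; you instead delete the maximum-degree vertex $u$, note that $G-u$ has only $g-1$ edges and is therefore a forest, and classify the components. For $g\ge5$ your decomposition closes cleanly and is in fact tighter than the paper's argument: two components with $e_i\ge2$ would force $\sum_i m_i\ge 2(g-2)>g-1$, and three $u$-neighbours in $T_1$ with pairwise tree-distances at least $g-2$ would force a Steiner subtree with at least $\tfrac{3}{2}(g-2)>g-1$ edges, so $e_1=2$ and $T_1$ is the $a$--$b$ path of length $g-2$ plus at most one pendant edge (you should add one line excluding $\mathrm{dist}_{T_1}(a,b)=g-1$, which would make every cycle of $G$ have length $g+1$ and contradict girth $g$, and one line noting that $u$ cannot attach twice to a $K_2$ component without creating a triangle). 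The leftover-edge bookkeeping then yields exactly $G_1,\dots,G_{\lfloor g/2\rfloor},G_v$.

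The genuine gap is at $g=4$, and it cannot be patched: the step ``a third $u$-neighbour in $T_1$ forces a cycle strictly shorter than $g$'' is false there. Take $T_1=K_{1,3}$ with centre $s$ and leaves $a,b,c$, let $u$ be adjacent to $a,b,c$ and carry $m-6$ further pendant edges. All three pairwise tree-distances equal $2=g-2$, every cycle has length $4$, there is no triangle, and $d(u)=m-3=m-g+1$ is the maximum degree. This graph ($K_{2,3}$ with $m-6$ pendant edges attached to one vertex of degree $3$) lies in $\mathbb{G}_{\Delta}(m,4)$ for every $m\ge6$ but is bicyclic, hence not among $G_1,G_2,G_v$; so the lemma as stated actually fails for $g=4$. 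The paper's own proof has the same blind spot: for $g=4$ the induced subgraph $G'$ may already contain an edge from a non-cycle neighbour of $0$ to the antipodal cycle vertex without creating a cycle shorter than $4$, so the edge count $m(G')=m-1$ need not hold. Your argument is correct once one restricts to $g\ge5$, or, for $g=4$, adds this exceptional graph to the list.
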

\begin{proof}
Let $G\in\mathbb{G}_{\Delta}(m, g)$ with a cycle $C_g=012\cdots(g-2)(g-1)0$.  Without loss of generality, we may assume that $d(0)=\Delta(G)=m-g+1\ge 3$.
Let $G'$ be a subgraph of $G$ induced by
$V(C_g)\cup N_{G}(0)$.
 We have
$m(G')=|C_g|+d(0)-2=m-1$.   Thus $G$ can be obtained from  $G'$ by adding an edge $e$.
Denote by  $v\not\in C_g$  a vertex adjacent with $0$.
By the minimality of  the length of $C_g$ and  $g\ge4$, we get that
$e$ must be a pendent edge attaching  one  vertex  of   $\{1,2,\ldots,g-1,v\}$. By considering  the symmetry of  the vertices $i$ and $g-i$ in $C_g$, we have
 $\mathbb{G}_{\Delta}(m, g)=\{G_1, G_2, \ldots,G_{\lfloor\frac{g}{2}\rfloor}, G_v\}$.
\end{proof}

\begin{lem}\label{Gi-compare}
Let  $G_i\in \mathbb{G}_{\Delta}(m, g)$ shown in Fig.\ref{girth}, where $1\le i\le \lfloor\frac{g}{2}\rfloor$. Then $q(G_1)>q(G_{2})>\cdots>q(G_{\frac{g}{2}})$.
\end{lem}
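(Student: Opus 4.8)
The plan is to prove the chain one comparison at a time: for each $i$ with $1\le i\le\lfloor g/2\rfloor-1$ I will show $q(G_i)>q(G_{i+1})$, which yields the whole ordering. The point is that $G_i$ and $G_{i+1}$ come from the \emph{same} base graph --- a cycle $C_g$ carrying $m-g-1$ pendant edges at the vertex $0$ --- by attaching one further pendant edge, $iw$ in $G_i$ and $(i+1)w$ in $G_{i+1}$. Hence $G_{i+1}-\{(i+1)w\}+\{iw\}=G_i$, and Lemma~\ref{lem-perron} (with $G=G_{i+1}$, $t=1$, $v=i+1$, $u=i$, $w_1=w$) reduces the task to a single inequality: if $\mathbf{x}$ denotes the Perron vector of $Q(G_{i+1})$, then
$$x_i\ge x_{i+1}\ \Longrightarrow\ q(G_i)>q(G_{i+1}).$$

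To prove $x_i\ge x_{i+1}$, I would study $\mathbf{x}$ on the cycle $C_g=0\,1\cdots(g-1)\,0$ of $G_{i+1}$. Set $q:=q(G_{i+1})$. Since $\Delta(G_{i+1})=m-g+1\ge\tfrac{2m}{3}$, Theorem~\ref{outG-0} gives $q>m-g+2\ (\ge 2g+2)$, so $q$ is large; then Lemma~\ref{entry-upbound} makes every entry of $\mathbf{x}$ at a vertex of degree $\le 3$ tiny while $x_0$ dominates, so in particular $x_0>x_1$ and $x_0>x_{g-1}$. Moreover $\mathbf{x}$ has no ``interior local maximum'': if some $j\in\{1,\dots,g-1\}$ had $x_j\ge x_{j-1}$ and $x_j\ge x_{j+1}$, then $qx_j=d(j)x_j+\sum_{v\in N(j)}x_v\le 2d(j)x_j$ (using $x_v\le x_j$ for every neighbour, including $w$ when $j=i+1$, as $x_w=x_{i+1}/(q-1)<x_{i+1}$), giving $q\le 2d(j)\le 6$, which contradicts $q>2g+2$. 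Hence the cyclic sequence $(x_j)_{j\in\mathbb{Z}_g}$ is unimodal with peak at $0$: there is an index $k^{*}$ with $x_0>x_1\ge\cdots\ge x_{k^{*}}$ and $x_{k^{*}}\le x_{k^{*}+1}\le\cdots\le x_{g-1}<x_0$.

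It now suffices to show $k^{*}\ge i+1$, since then $i$ and $i+1$ both lie on the decreasing run from $0$ and so $x_i\ge x_{i+1}$. If $g$ is even and $i+1=g/2$, this is immediate from the reflection automorphism $j\mapsto g-j$ of $G_{g/2}$: it forces $x_{i}=x_{i+2}$, so the equation at the degree-$3$ vertex $i+1$ reads $\bigl(q-3-\tfrac{1}{q-1}\bigr)x_{i+1}=2x_i$, whence $x_i>x_{i+1}$ as $q>6$. In all other cases (so that $i+1<g/2$), suppose for contradiction $k^{*}\le i$. Along the decreasing run each eigenvalue equation shrinks the Perron entry by a factor at most $q-2$, so $x_{k^{*}}\ge x_0/(q-2)^{k^{*}}$; along the increasing run each of its $g-1-k^{*}$ edges expands the entry by a factor $\ge q-3$, except the turnaround edge at $k^{*}$ (factor $\ge 1$) and the edge leaving the degree-$3$ vertex $i+1$ (factor $\ge q-5$), so $x_0>x_{g-1}\ge (q-5)^{\,g-2-k^{*}}x_{k^{*}}$. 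Combining, $(q-2)^{k^{*}}> (q-5)^{\,g-2-k^{*}}$, and taking logarithms with $q>2g+2$ gives $k^{*}>\tfrac{g-3}{2}$, which forces $k^{*}\ge i+1$ --- a contradiction. Hence $x_i\ge x_{i+1}$ always, and the lemma follows.

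I expect the real obstacle to be this last step: pinning the valley $k^{*}$ down to the antipodal region of $C_g$. The heuristic is clear --- the Perron mass decays by a factor $\approx q$ per step away from the heavy vertex $0$, so the two decaying branches around the cycle must meet near its midpoint --- but turning this into the sharp bound $k^{*}\ge i+1$ is precisely what forces $q$ (equivalently $m-g$) to be large, and it is the case $i+1$ close to $g/2$ that is delicate, which is why the reflection-symmetric case $i+1=g/2$ has to be treated on its own. The asymmetry introduced by the extra pendant at $i+1$ is harmless, costing only one transfer factor ($q-3\to q-5$).
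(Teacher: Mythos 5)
Your argument reaches the right conclusion but by a genuinely different route from the paper, and it is worth seeing what each buys. The paper never analyses the shape of the Perron vector on the cycle. Working with the Perron vector of $G_i$, it runs a cascading dichotomy: if $x_{i-1}\ge x_i$, move the pendant $w$ from $i$ to $i-1$ via Lemma \ref{lem-perron}; otherwise $x_{i-1}<x_i$, and if $x_{i-2}\ge x_{i+1}$ a quadrangle switch (Lemma \ref{lem-four-compare}) on the edges $(i-j-1)(i-j)$ and $(i+j-1)(i+j)$ with $j=1$ produces a graph isomorphic to $G_{i-1}$ of larger $Q$-index; iterating over $j=1,\dots,i-1$, the only escape is $x_0<x_{2i-1}$, whereupon all $m-g-1$ pendants at $0$ are moved to $2i-1$, again yielding $G_{i-1}$. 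That argument is estimate-free and covers every $m\ge g+2$, i.e.\ the lemma exactly as stated. Your route---unimodality of the Perron vector on the cycle plus decay/growth factors to pin the valley $k^*$ to the antipodal region---gives a more structural picture, and the quantitative core does check out: $(q-2)^{k^*}>(q-5)^{g-2-k^*}$ does force $k^*>\tfrac{g-3}{2}$ once $q>2g+2$, and you are right that the even-$g$ case $i+1=g/2$ sits exactly on the boundary of that bound and needs the separate reflection argument. The price is twofold. First, your proof genuinely requires $q$ large relative to $g$ (you invoke $q>2g+2$, i.e.\ $m\ge 3g$), a hypothesis absent from the lemma itself and only imposed later in Theorem \ref{order-girth}; for $g+2\le m<3g$ your valley-location step yields nothing, while the paper's switching argument still works. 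Second, two steps you wave at must be written out: that $x_0$ is the strict maximum entry (easy: a weak maximum at a vertex $u\ne 0$ would give $qx_u\le 2d(u)x_u\le 6x_u$, against $q>6$), and the deduction of unimodality from the absence of interior weak local maxima on the cyclic sequence. With those supplied, your proof is a correct alternative for the parameter range in which the lemma is actually applied.
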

\begin{proof}
For any  $2\le i \le\lfloor\frac{g}{2}\rfloor$, let $\mathbf{x}=(x_u)$ be the Perron vector of $Q(G_i)$, where  $u\in V(G_i)$.
It suffices   to show $q(G_i)<q(G_{i-1})$. To prove our  result, first we give the following claim.
{\flushleft\bf Claim 1.} If there exists $1\le j \le i-1$ such that $x_{i-j}<x_{i+j-1}$ and $x_{i-j-1}\ge x_{i+j}$, then $q(G_i)<q(G_{i-1})$.
\begin{proof}
In fact, let $G'=G_i-\{(i-j-1)( i-j), (i+j-1)(i+j)\}+\{(i-j-1)( i+j-1), (i-j)(i+j)\}$ (see Fig.\ref{case2-girth}).  One can observe that $G'\cong G_{i-1}$.
By Lemma \ref{lem-four-compare}, we have $q(G_i)<q(G')=q(G_{i-1})$.\end{proof}

We start to prove by firstly assuming $x_{i-1}\ge x_i$. Now we construct  $G''=G_i-wi+w(i-1)$ from $G_i$. It is clear that $G''\cong G_{i-1}$.
By Lemma \ref{lem-perron}, we have $q(G_i)<q(G'')=q(G_{i-1})$. Otherwise $x_{i-1}< x_i$, if $x_{i-2}\ge x_{i+1}$ then from Claim 1 we  get $q(G_i)<q(G_{i-1})$ by taking $j=1$. Otherwise  $x_{i-2}< x_{i+1}$, if $x_{i-3}\ge x_{i+2}$ then from Claim 1 we  get $q(G_i)<q(G_{i-1})$ by taking $j=2$. Repeating $i$ steps we come to the assumption $x_{0}< x_{2i-1}$ for $j=i$.
Let $N_{G_i}(0)=\{1,g-1, w_1,\ldots, w_{m-g-1}\}$ and $G'''=G_i-\{0w_t \mid 1\le t\le m-g-1\}+\{(2i-1)w_t \mid 1\le t\le m-g-1\}$. Clearly, $G'''\cong G_{i-1}$.
By Lemma \ref{lem-perron}, we have $q(G_i)<q(G''')=q(G_{i-1})$.

It completes the proof.
\end{proof}

\begin{figure}[h]
  \centering
   \footnotesize
% This is a LaTeX picture output by TeXCAD.
% File name: [Clipboard].
% Version of TeXCAD: 4.3
% Reference / build: 30-Jun-2012 (rev. 105)
% For new versions, check: http://texcad.sf.net/
% Options on the following lines.
%\grade{\on}
%\emlines{\off}
%\epic{\off}
%\beziermacro{\on}
%\reduce{\on}
%\snapping{\on}
%\pvinsert{% Your \input, \def, etc. here}
%\quality{8.000}
%\graddiff{0.005}
%\snapasp{1}
%\zoom{6.7272}
\unitlength 1mm % = 2.845pt
\linethickness{0.4pt}
\ifx\plotpoint\undefined\newsavebox{\plotpoint}\fi % GNUPLOT compatibility
\begin{picture}(121,48.5)(0,0)
\put(27,38.5){\circle*{2}}
\put(18,35.5){\circle*{2}}
\put(36,35.5){\circle*{2}}
\put(27,10.5){$\cdots$}
%\emline(20,47.5)(27,38.5)
\multiput(20,47.5)(.033653846,-.043269231){208}{\line(0,-1){.043269231}}
%\end
\put(24,47.5){\line(1,-3){3}}
%\emline(32,47.5)(27,38.5)
\multiput(32,47.5)(-.033557047,-.060402685){149}{\line(0,-1){.060402685}}
%\end
\put(20,47.5){\circle*{2}}
\put(24,47.5){\circle*{2}}
\put(32,47.5){\circle*{2}}
\put(26,47.5){$\ldots$}
\put(26,34.5){$0$}
\put(14,35.5){$1$}
\put(38,35.5){$g\!-\!1$}
\put(21,48.5){$\overbrace{ \qquad \ \ \ \ }^{m-g-1}$}
\put(18,11.5){\circle*{2}}
%\emline(18,11.5)(11,5.5)
\multiput(18,11.5)(-.039325843,-.033707865){178}{\line(-1,0){.039325843}}
%\end
\put(17,7.5){$i$}
\put(36,11.5){\circle*{2}}
\put(41,17.5){\circle*{2}}
\put(13,29.5){\circle*{2}}
\put(11,5.5){\circle*{2}}
\put(10,2.5){$w$}
\put(12,21.5){\circle*{2}}
\put(4,21){$i\!-\!j$}
\put(0,29){$i\!-\!j\!-\!1$}
\put(38,10){$i\!+\!j\!-\!1$}
\put(43,17){$i\!+\!j$}
\put(40,25){$\vdots$}
\put(14,15){$\ddots$}
\put(24,0){$G_i$}
\put(105,38.5){\circle*{2}}
\put(96,35.5){\circle*{2}}
\put(114,35.5){\circle*{2}}
\put(105,10.5){$\cdots$}
%\emline(98,47.5)(105,38.5)
\multiput(98,47.5)(.033653846,-.043269231){208}{\line(0,-1){.043269231}}
%\end
\put(102,47.5){\line(1,-3){3}}
%\emline(110,47.5)(105,38.5)
\multiput(110,47.5)(-.033557047,-.060402685){149}{\line(0,-1){.060402685}}
%\end
\put(98,47.5){\circle*{2}}
\put(102,47.5){\circle*{2}}
\put(110,47.5){\circle*{2}}
\put(104,47.5){$\ldots$}
\put(104,34.5){$0$}
\put(92,35.5){$1$}
\put(116,36.5){$g\!-\!1$}
\put(99,48.5){$\overbrace{ \qquad \ \ \ \ }^{m-g-1}$}
\put(96,11.5){\circle*{2}}
%\emline(96,11.5)(89,5.5)
\multiput(96,11.5)(-.039325843,-.033707865){178}{\line(-1,0){.039325843}}
%\end
\put(95,7.5){$i$}
\put(114,11.5){\circle*{2}}
\put(119,17.5){\circle*{2}}
\put(91,29.5){\circle*{2}}
\put(89,5.5){\circle*{2}}
\put(88,2.5){$w$}
\put(82,20.5){$i\!-\!j$}
\put(79,29.5){$i\!-\!j\!-\!1$}
\put(115,8.5){$i\!+\!j\!-\!1$}
\put(121,15.5){$i\!+\!j$}
\put(115,5.5){$(i\!-\!j)$}
\put(93,4.5){$(i\!-\!1)$}
\put(78,16.5){$(i\!+\!j\!-\!1)$}
\put(116,25){$\vdots$}
\put(93,15){$\ddots$}
\put(88.5,24){\XSolidBold}
\put(115,12){\XSolidBold}
\put(104,0){$G'$}
%\emline(91,29)(113,12)
\multiput(91,29)(.0436507937,-.0337301587){504}{\line(1,0){.0436507937}}
%\end
\put(90,22){\circle*{2}}
%\emline(90,22)(118,18)
\multiput(90,22)(.235294118,-.033613445){119}{\line(1,0){.235294118}}
%\end
\put(27,23.5){\circle{30}}
\put(105,23.5){\circle{30}}
\end{picture}

  \caption{\footnotesize $G_i$ and $G'$ used in the proof of  Lemma \ref{Gi-compare},  where the edge with ``\XSolidBold'' represents it is deleted.}\label{case2-girth}
\end{figure}
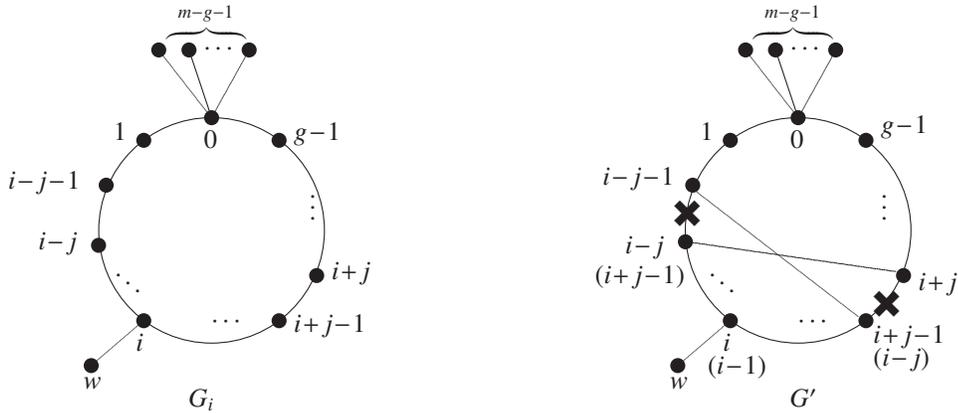

Next we will give a lower and upper bound for $q(G_i)$.
\begin{lem}\label{G_i-bound}
Let  $G_i\in \mathbb{G}_{\Delta}(m, g)$ with  $1\le i\le \lfloor\frac{g}{2}\rfloor$ shown in  Fig.\ref{girth}. If   $m\ge g+3$,
we have $$m-g+2 < q(G_i) \le m-g+2+\frac{2}{ m-g+1}<m-g+3.$$
\end{lem}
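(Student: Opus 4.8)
Write $\Delta=m-g+1$, so that $\Delta=d_{G_i}(0)=\Delta(G_i)$ and $\Delta\ge 4$ (since $m\ge g+3$). The inequality $m-g+2+\frac{2}{m-g+1}<m-g+3$ is immediate from $\Delta>2$, and the lower bound is easy: $G_i$ properly contains the star $K_{1,\Delta}$ with centre $0$, so Lemma~\ref{lem-subgraph-radius} (equivalently, Theorem~\ref{outG-0}(ii) with $s=\Delta$) gives $q(G_i)>q(K_{1,\Delta})=\Delta+1=m-g+2$. So the whole point is the upper bound $q(G_i)\le m-g+2+\frac{2}{m-g+1}$, which I would prove in two cases, according to whether $i\ge2$ or $i=1$.

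\textbf{Case $2\le i\le\lfloor\frac{g}{2}\rfloor$.} Here I would just use Lemma~\ref{bound}. Evaluating $f(u):=d(u)+\frac1{d(u)}\sum_{v\in N(u)}d(v)$ over the vertices of $G_i$ gives $f(0)=\Delta+\frac{m-g+3}{\Delta}=m-g+2+\frac{2}{m-g+1}$ (the two cycle-neighbours of $0$ have degree $2$, and its other $m-g-1$ neighbours are leaves), while $f(u)<f(0)$ at every other vertex — the degree-$3$ vertex $i$, its pendant $w$, the leaves at $0$, and the remaining degree-$2$ cycle vertices (among them $1$ and $g-1$) — each verification using only $\Delta\ge 4$. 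Thus $\max_uf(u)=m-g+2+\frac{2}{m-g+1}$, and Lemma~\ref{bound} yields the claim (with strict inequality, in fact, since $G_i$ is neither regular nor semiregular bipartite).

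\textbf{Case $i=1$.} This is the heart of the matter: now $0$ has a neighbour (the vertex $1$) of degree $3$, and Lemma~\ref{bound} only gives the weaker $q(G_1)<m-g+2+\frac{3}{m-g+1}$. I would instead argue with the Perron vector $\mathbf{x}$ of $Q(G_1)$, normalised so that $x_0=1$, and put $q=q(G_1)$. Since $m-g-1=\Delta-2$ and each leaf $p$ of $0$ satisfies $x_p=\frac1{q-1}$, combining the eigenvalue equations at $0$ and at those leaves gives the key identity $x_1+x_{g-1}=q-\Delta-\frac{\Delta-2}{q-1}$. To bound $x_1+x_{g-1}$ from above, observe that every interior vertex of the path $1-2-\cdots-(g-1)$ has degree $2$ in $G_1$, so the largest of $x_2,\dots,x_{g-2}$ is attained at $x_2$ or $x_{g-2}$ (at an interior vertex with both neighbours interior the eigenvalue equation would force $q\le4$); a one-step estimate there then gives $x_2,x_{g-2}\le\frac{x_1+x_{g-1}}{q-3}$. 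Feeding this into the eigenvalue equations at vertex $1$ (degree $3$, with pendant $w$) and at vertex $g-1$ (degree $2$) and adding produces the self-improving inequality
\[
x_1+x_{g-1}\le\Bigl(1+\tfrac{x_1+x_{g-1}}{q-3}\Bigr)c(q),\qquad c(q):=\frac1{q-2}+\frac1{\,q-3-\frac1{q-1}\,},
\]
and hence $x_1+x_{g-1}\le\frac{c(q)(q-3)}{q-3-c(q)}$.

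Comparing this with the key identity, $q=q(G_1)$ must satisfy $q-\Delta-\frac{\Delta-2}{q-1}\le\frac{c(q)(q-3)}{q-3-c(q)}$. The left side is increasing in $q$ and the right side decreasing (for $q>\Delta+1\ge5$), so it suffices to verify the reverse inequality at the single value $q^{*}:=m-g+2+\frac{2}{m-g+1}=\Delta+1+\frac2\Delta$; there the left side equals $\frac2\Delta+\frac{2\Delta+2}{\Delta^2+2}$ (about $\frac4\Delta$) and the right side is about $\frac2\Delta$, and a short monotonicity check in $\Delta\ge4$ confirms that the left side dominates, giving $q(G_1)\le q^{*}$. (For $g=3$, where only $i=1$ occurs and the path above is a single edge, an analogous but shorter computation applies.) The main obstacle is precisely this case $i=1$: the target bound lies strictly between $q(K_{1,m-g+1})=m-g+2$ and $m-g+3$ and is rather tight, so crude estimates such as $x_1,x_{g-1}\le x_0$ lose far too much — one must genuinely exploit the geometric decay of the Perron entries along the cycle to pin the constant down to $2$.
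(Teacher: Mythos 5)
Your case $2\le i\le\lfloor\frac{g}{2}\rfloor$ is exactly the paper's argument: apply Lemma~\ref{bound} and check that the maximum of $d(u)+\frac{1}{d(u)}\sum_{v\in N(u)}d(v)$ over $V(G_i)$ is attained at $u=0$ with value $m-g+2+\frac{2}{m-g+1}$. The real divergence is your case $i=1$, and you are right that it is genuinely problematic: the paper runs the same computation uniformly in $i$ and asserts that the quantity at $u=0$ equals $m-g+2+\frac{2}{m-g+1}$, but for $i=1$ the cycle-neighbour $1$ of $0$ has degree $3$, so $\sum_{v\in N(0)}d(v)=m-g+4$ and Lemma~\ref{bound} only yields $q(G_1)\le m-g+2+\frac{3}{m-g+1}$. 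Thus the paper's proof of the stated constant $2$ has a gap at $i=1$, which your proposal both detects and repairs. Your repair checks out: the identity $x_1+x_{g-1}=q-\Delta-\frac{\Delta-2}{q-1}$ (with $x_0=1$), the observation that the largest Perron entry among the interior cycle vertices must be adjacent to $1$ or $g-1$ (otherwise $q\le4$), the resulting bound $x_j\le\frac{x_1+x_{g-1}}{q-3}$, and the self-improving inequality are all valid; at $q^{*}=\Delta+1+\frac{2}{\Delta}$ the two sides behave like $\frac{4}{\Delta}$ versus $\frac{2}{\Delta}$, and at the tightest value $\Delta=4$ they are approximately $1.056$ versus $1.021$, so the monotonicity argument closes the case (indeed $q(G_1)\approx 5.41<5.5$ for $g=4$, $m=7$).

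Two remarks. First, the weaker bound $m-g+2+\frac{3}{m-g+1}$ still lies below $m-g+3$ when $m\ge g+3$, and the later arguments in the paper seem to use only $q(G_i)>m-g+2$ and $q(G_i)<m-g+3$, so the downstream theorems are not endangered; but Lemma~\ref{G_i-bound} as stated does require your extra work for $i=1$. Second, a minor slip in your $i\ge2$ case: at a cycle vertex adjacent to both $0$ and $i$ (e.g.\ vertex $1$ when $i=2$) the quantity is $2+\frac{m-g+4}{2}$, which \emph{equals} $f(0)$ when $m=g+3$ rather than being strictly smaller; this is harmless, since Lemma~\ref{bound} only needs the maximum, and the final strict inequality can be recovered from the equality case of that lemma as you note.
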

\begin{proof}
Since $\Delta(G_i)=m-g+1\le m-1$, by Theorem \ref{outG-0}(ii) we have $q(G_i)> m-g+2$. It suffices to show $q(G_i) \le m-g+2+\frac{2}{ m-g+1}$.
We consider $d(u) +\frac{\sum_{v\in N_{G_i}(u)} d(v)}{d(u)}$ by distinguishing the following situations.

If $u=0$, then
$d(u) +\frac{\sum_{v\in N_{G_i}(u)} d(v)}{d(u)}= m-g+2+\frac{2}{ m-g+1}$.
If $u=i$, we have
$$\begin{aligned}
d(u) +\frac{\sum_{v\in N_{G_i}(u)} d(v)}{d(u)}\leq &d(i)+\frac{d(w)+d(0)+d(i+1)}{d(i)}\leq 3+\frac{1+(m-g+1)+2}{3}\\
<&m-g+2+\frac{2}{ m-g+1} \ \mbox{( because $m\ge g+3$ )}.
\end{aligned}$$
If $u$ is a pendent vertex of $G_i$, then
$d(u) +\frac{\sum_{v\in N_{G_i}(u)} d(v)}{d(u)}\leq 1+d(0)=m-g+2< m-g+2+\frac{2}{ m-g+1}.$
If $u\in V(G_i)\setminus \{0,i\}$ is not a pendent vertex,
we have
$d(u) +\frac{\sum_{v\in N_{G_i}(u)} d(v)}{d(u)}\leq2+\frac{d(0)+d(i)}{2}\leq2+\frac{m-g+4}{2}\leq m-g+2+\frac{2}{ m-g+1}$ for $m\ge g+3$.

Thus by Lemma \ref{bound}, we have
$$q(G_i)\leq\max_{u \in V (G_i)}\left\{d(u) +\frac{\sum_{v\in N_{G_i}(u)} d(v)}{d(u)}\right\}= m-g+2+\frac{2}{ m-g+1}.$$
%Clearly, $m-g+2+\frac{2}{ m-g+1}<m-g+3$
Thus the result holds.
\end{proof}

\begin{lem}\label{max-entry}
For $1\le i\le \lfloor\frac{g}{2}\rfloor$ and $m\ge\max\{2g-2, g+7\}$,
let $\mathbf{x}$ be  the Perron vector of $Q(G_i)$. Then  $x_0$ is the maximum entry of  $\mathbf{x}$.
\end{lem}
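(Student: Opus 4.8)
The plan is to use the eigenvalue equation $Q(G_i)\mathbf{x}=q(G_i)\mathbf{x}$ together with the bounds on $q(G_i)$ from Lemma~\ref{G_i-bound} to compare $x_0$ with the entry at every other vertex. First I would record the relevant local equations. If $p$ is any pendant vertex attached to $0$, then $q\,x_p=x_p+x_0$, so $x_p=\frac{x_0}{q-1}<x_0$ since $q=q(G_i)>m-g+2\ge 2$. Similarly the pendant $w$ at $i$ satisfies $x_w=\frac{x_i}{q-1}$, so $x_w<x_i$, and the pendant $v_1$ in $G_v$-type arguments is handled the same way; hence it suffices to show $x_0>x_j$ for every vertex $j$ on the cycle $C_g$ and for the vertex $i$'s neighbours on the cycle. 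The key inequality is the one at vertex $0$: writing $N_{G_i}(0)=\{1,g-1\}\cup\{w_1,\dots,w_{m-g-1}\}$, we have
$$q\,x_0=(m-g+1)x_0+x_1+x_{g-1}+\sum_{t}x_{w_t}=(m-g+1)x_0+x_1+x_{g-1}+(m-g-1)\frac{x_0}{q-1}.$$
Combined with $q<m-g+3$ from Lemma~\ref{G_i-bound}, this forces $x_1+x_{g-1}$ to be small relative to $x_0$: rearranging gives $x_1+x_{g-1}=\bigl(q-(m-g+1)-\frac{m-g-1}{q-1}\bigr)x_0<2x_0$ after using $q<m-g+3$, and in fact one gets a bound of the form $x_1+x_{g-1}\le c\,x_0$ with $c$ bounded away from $2$ once $m$ is large, which already yields $\min\{x_1,x_{g-1}\}<x_0$; a little more care (using that both are positive) gives $\max\{x_1,x_{g-1}\}<x_0$.

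Next I would propagate this bound around the cycle. For an interior cycle vertex $j\notin\{0,i\}$ of degree $2$ with cycle-neighbours $j^-,j^+$, the equation reads $q\,x_j=2x_j+x_{j^-}+x_{j^+}$, i.e. $x_{j^-}+x_{j^+}=(q-2)x_j$. Since $q-2>m-g\ge 1$ is large, this says each $x_j$ is small compared to the sum of its neighbours, which lets me run an induction from vertex $0$ outward along the two arcs of $C_g$: starting from $x_1<x_0$ (and $x_{g-1}<x_0$), the recurrence $x_{j+1}=(q-2)x_j-x_{j-1}$ together with $q-2$ large shows the entries cannot exceed $x_0$ before reaching the ``far'' part of the cycle; the vertex $i$ (degree $3$) is the only complication, and there the equation $q\,x_i=3x_i+x_{i-1}+x_{i+1}+x_w=3x_i+x_{i-1}+x_{i+1}+\frac{x_i}{q-1}$ again gives $x_{i-1}+x_{i+1}=\bigl(q-3-\frac{1}{q-1}\bigr)x_i$, with the coefficient large, so $x_i$ is likewise controlled by its cycle-neighbours. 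Alternatively, and perhaps more cleanly, I would use Lemma~\ref{entry-upbound}: for any vertex $u$ with $d(u)\le 3$ we get $x_u^2\le \frac{1}{1+(q-d(u))^2/d(u)}\le\frac{3}{3+(q-3)^2}$, while at $u=0$ the same lemma's proof direction combined with the explicit equation above gives a matching \emph{lower} bound $x_0^2\ge$ (something like) $\frac{1}{1+(\text{stuff})}$ that beats $\frac{3}{3+(q-3)^2}$ once $q\ge m-g+2$ and $m\ge\max\{2g-2,g+7\}$. Since $q-3>m-g-1\ge 6$ under the hypothesis $m\ge g+7$, the quantity $\frac{3}{3+(q-3)^2}$ is tiny, whereas $x_0$ is of order $1$ (its square is bounded below by a constant, because vertex $0$ carries almost all the ``mass'' of $\mathbf{x}$), so $x_0>x_u$ for all $u\ne 0$.

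The main obstacle I anticipate is handling the vertices $i,\,i+1,\ldots$ on the cycle and the pendant $w$: unlike a pure star, $G_i$ has a second ``hub-ish'' vertex of degree $3$, and one must make sure the Perron weight does not concentrate there. The hypothesis $m\ge\max\{2g-2,g+7\}$ is exactly what is needed so that (a) the girth $g$ is small enough relative to $m$ that the $O(g)$ cycle vertices each remain negligible — this is where $m\ge 2g-2$ enters, bounding how far the recurrence has to be iterated before the arcs meet — and (b) the degree-$3$ vertex $i$ is genuinely dominated, which needs $q-3$ large, i.e. $m\ge g+7$. So the crux is a careful quantitative estimate showing $x_i$ (hence $x_w$ and all cycle entries) is strictly below $x_0$; once that is in hand, comparing $x_0$ to the remaining pendant vertices is immediate from $x_p=x_0/(q-1)<x_0$. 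I would organize the write-up as: (1) pendant vertices of $0$; (2) lower bound on $x_0$ via the vertex-$0$ equation and $q<m-g+3$; (3) upper bound on $x_i$ and all degree-$\le 3$ cycle vertices via Lemma~\ref{entry-upbound} and $q>m-g+2$; (4) conclude $x_0$ is the unique maximum.
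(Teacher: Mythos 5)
Your second, ``alternatively'' route is essentially the paper's proof: the paper applies Lemma \ref{entry-upbound} to get $x_i^2\le \frac{1}{1+(q-3)^2/3}$ and $x_j^2\le\frac{1}{1+(q-2)^2/2}$ for the cycle vertices, uses the eigenvalue equation to write the pendant entries as $x_v=\frac{x_0}{q-1}$ and $x_w=\frac{x_i}{q-1}$, and then plugs everything into the normalization $1=\sum_u x_u^2$ to obtain an explicit lower bound on $x_0^2$, which it shows exceeds $\frac12$; since the vector is a unit vector this immediately forces $x_u^2<\frac12<x_0^2$ for every $u\ne 0$. Two remarks. First, your opening route (propagating the recurrence $x_{j+1}=(q-2)x_j-x_{j-1}$ around the two arcs of $C_g$) is not needed and would be considerably more delicate than you suggest: the iterates involve cancellations of nearly equal quantities, positivity of each $x_{j+1}$ is not free from the recurrence alone, and controlling the arcs up to where they meet at the degree-$3$ vertex $i$ is exactly the kind of bookkeeping the normalization argument avoids. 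Second, the one genuine gap in your write-up is the assertion that ``$x_0$ is of order $1$ because vertex $0$ carries almost all the mass'' --- that is the conclusion, not a premise. The paper earns it by the computation sketched above: after substituting the bounds, $x_0^2\ge\bigl(1-(g-2)\tfrac{1}{1+(q-2)^2/2}-(1+\tfrac{1}{(q-1)^2})\tfrac{1}{1+(q-3)^2/3}\bigr)/\bigl(1+\tfrac{m-g-1}{(q-1)^2}\bigr)$, and verifying this exceeds $\frac12$ is where both hypotheses enter: $m\ge g+7$ gives $q>m-g+2\ge 9$ (so various fixed polynomials in $q$ are positive), and $m\ge 2g-2$ gives $q>g$, which is what makes the total contribution of the $g-2$ cycle vertices, roughly $(g-2)\cdot\frac{2}{(q-2)^2}$, negligible. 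So your intuition about the roles of the two hypotheses is right in spirit, but the decisive quantitative step still has to be carried out.
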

\begin{proof}
Let $q=q(G_i)$ and $v$, $w$ be a pendant vertex attaching to the vertex $0$ and $i$ of $G_i$, respectively (see Fig.\ref{girth}).
By eigenvalue equation, we have $x_v=\frac{1}{q-1}x_0$ and $x_w=\frac{1}{q-1}x_i$.
Note that $d(i)=3$ and $d(j)=2$ for $1\le j\le g-1$ and $j\not=i$.
By Lemma \ref{entry-upbound}, we have
 $$x_i^2\le \frac{1}{1+\frac{(q-3)^2}{3}}    \ \ \mbox{and} \ \
 x_j^2\le \frac{1}{1+\frac{(q-2)^2}{2}}.$$
Furthermore, we get that
$$
 \begin{aligned}
 1=&\sum_{u\in V(G_i)}x^2_u
 =x_0^2+(m-g-1)x_v^2+\sum_{j=1,j\not=i}^{g-1}x_j^2+x^2_i+x^2_w\\
 \le& x_0^2+ (m-g-1)\cdot(\frac{1}{q-1}x_0)^2+(g-2)\cdot\frac{1}{1+\frac{(q-2)^2}{2}}+(1+\frac{1}{(q-1)^2})\cdot\frac{1}{1+\frac{(q-3)^2}{3}}
\end{aligned}
$$
It follows that
\begin{eqnarray*}
x^2_0\ge \frac{1-(g-2)\cdot\frac{1}{1+\frac{(q-2)^2}{2}}-(1+\frac{1}{(q-1)^2})\cdot\frac{1}{1+\frac{(q-3)^2}{3}}}{1+\frac{m-g-1}{(q-1)^2}},
\end{eqnarray*}
which is equavalent to
\begin{eqnarray}\label{x_0-girth}
x^2_0\ge \frac{h_1(q)+h_2(q)}{2h_3(q)}+\frac{1}{2},
\end{eqnarray}
where
$$\left\{\begin{array}{ll}
h_1(q)=q^6 - 17q^5 + 110q^4 - 378q^3 + 716q^2 - 720q + 312,\\
h_2(q)=(q^4 - 10q^3 + 42q^2 - 84q + 72)(q-(m-g+2))+
4(q^2 - 6q + 12)( q - 1)^2(q-g),\\
h_3(q)=( q^2 - 4q + 6)( q^2 - 6q + 12)( q^2 - 2q + m-g).
\end{array}\right.$$
Recall that $\Delta(G_i)=m-g+1\le m-1$, by Theorem \ref{outG-0}(ii)
 we have $q>m-g+2\ge9$ due to $m\ge g+ 7$.   Moreover, notice that the polynomials below are  constant coefficients,  using the computer
we  get  for $q>9$ that
$$h_1(q)>0, q^4 - 10q^3 + 42q^2 - 84q + 72>0,  q^2 - 6q + 12>0,  q^2 - 4q + 6>0. $$
Since $m\ge2g-2$, we have $q>m-g+2\ge g$ and so $q-(m-g+2), q-g>0$. Thus $h_2(q)> 0$.
Clearly, $m-g\ge0$. Then we have $q^2 - 2q + m-g>0$, and so $h_3(q)>0$.
From (\ref{x_0-girth}), we have $x_0^2> \frac{1}{2}$ and thus $x^2_u<\frac{1}{2}$ for  any $u\in V(G_i)\backslash\{0\}$.

It completes the proof.
\end{proof}

\begin{lem}\label{three-compare}
If $g\ge4$ and $m\ge\max\{2g-2, g+7\}$, then $q(G_1)>q(G_{v})>q(G_2)$.
\end{lem}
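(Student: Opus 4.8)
The plan is to prove the two strict inequalities $q(G_v)<q(G_1)$ and $q(G_2)<q(G_v)$ separately. In each case I would exhibit an edge-relocation carrying one graph to the other and invoke Lemma~\ref{lem-perron}, so the whole statement reduces to comparing two coordinates of a Perron vector. For the book-keeping I would record at the outset that $\Delta(G_1)=\Delta(G_2)=\Delta(G_v)=m-g+1\le m-1$, so Theorem~\ref{outG-0}(ii) gives $q>m-g+2\ge 9$ (using $m\ge g+7$), and that Lemma~\ref{max-entry} gives $x_0>1/\sqrt2$ in each of these graphs (using $m\ge\max\{2g-2,g+7\}$); here and below $q$ denotes the $Q$-index of the graph under discussion and $\mathbf x$ its Perron vector.

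For $q(G_v)<q(G_1)$: deleting $vv_1$ and adding $1v_1$ in $G_v$ turns $v$ into a pendant at $0$ and $v_1$ into a pendant at vertex $1$, so the resulting graph is exactly $G_1$; by Lemma~\ref{lem-perron} it suffices to show $x_1\ge x_v$ in $Q(G_v)$. The eigenvalue equations at $v_1$ and $v$ give $x_{v_1}=x_v/(q-1)$ and hence $x_v=(q-1)x_0\big/\big((q-1)(q-2)-1\big)$, while those at $1$ and $2$ give $x_1=(x_0+x_2)/(q-2)$ and $x_2=(x_1+x_3)/(q-2)$ (all of $v,v_1,1,2,3$ having degree $\le 2$ in $G_v$). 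A one-line manipulation shows that $x_1\ge x_v$ is equivalent to $x_2\ge x_0\big/\big((q-1)(q-2)-1\big)$; and since $x_1>x_0/(q-2)$ and $x_3>0$ we get $x_2>x_0/(q-2)^2$, while $(q-2)^2<(q-1)(q-2)-1$ for every $q>3$, so the inequality is strict and $q(G_v)<q(G_1)$.

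For $q(G_2)<q(G_v)$: let $p$ be a pendant neighbour of $0$ in $G_2$ (one exists since $m-g-1\ge1$). Deleting $2w$ and adding $pw$ restores $d(2)=2$ and makes $p$ the middle vertex of a path $0pw$, so the result is $G_v$ (with $v=p$, $v_1=w$); by Lemma~\ref{lem-perron} it suffices to prove $x_p\ge x_2$ in $Q(G_2)$, that is, $x_0/(q-1)\ge x_2$. Since $d_{G_2}(2)=3$ while $1$ and $3$ have degree $2$, the eigenvalue equation at $2$ gives $x_2=(x_1+x_3)\big/\big(q-3-\tfrac1{q-1}\big)$, and Lemma~\ref{entry-upbound} gives $x_1,x_3<\sqrt2/(q-2)$, whence $x_2<2\sqrt2\big/\big((q-2)(q-3-\tfrac1{q-1})\big)$. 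Combining this with $x_0>1/\sqrt2$, the bound $x_0/(q-1)\ge x_2$ follows once $(q-2)\big(q-3-\tfrac1{q-1}\big)\ge 4(q-1)$, i.e.\ $q^2-9q+10\ge (q-2)/(q-1)$, which holds strictly for all $q>9$. Hence $q(G_2)<q(G_v)$, and together with the previous step $q(G_1)>q(G_v)>q(G_2)$.

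The eigenvalue-equation algebra and the final polynomial inequality are routine; the one genuinely delicate point is the estimate in the third paragraph, where the crude bound $x_2<\sqrt3/(q-3)$ that Lemma~\ref{entry-upbound} yields directly at the degree-$3$ vertex $2$ is too weak, and one must instead first bound the degree-$2$ entries $x_1,x_3$ and substitute back into the equation at $2$. This two-step estimate, together with the need for $q$ to be large enough and for Lemma~\ref{max-entry} to be applicable, is exactly what forces the hypotheses $m\ge g+7$ and $m\ge 2g-2$.
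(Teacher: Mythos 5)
Your proposal is correct, and the two halves relate to the paper's proof differently. For $q(G_2)<q(G_v)$ you follow essentially the paper's route: the same edge relocation $G_2-2w+pw\cong G_v$, the same reduction to $x_p\ge x_2$ via Lemma \ref{lem-perron}, and a two-step estimate that first bounds the degree-$2$ entries $x_1,x_3$ and substitutes into the equation at $2$; the paper bounds $y_1,y_3\le \frac{2}{q-2}y_0$ directly from the fact that $y_0$ is the maximum entry, while you use Lemma \ref{entry-upbound} together with $x_0>1/\sqrt{2}$ — note that the latter bound is established inside the \emph{proof} of Lemma \ref{max-entry} (the displayed inequality giving $x_0^2>\tfrac12$) rather than in its statement, so you should either cite that explicitly or switch to the paper's bound $x_1,x_3\le\frac{2}{q-2}x_0$, which leads to the same final polynomial inequality without this issue. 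For $q(G_v)<q(G_1)$ your argument is genuinely different and noticeably simpler: the paper expresses $x_1-x_v=f(q(G_v))x_0$ using the eigenvalue equation at the hub $0$, identifies $f$ with the characteristic polynomials $\phi_1$, $\phi_2$ of the auxiliary graphs $S_{n,3}$ and $H_0$, and must split into the cases $g\ge5$ and $g=4$; you instead work locally at vertices $1$ and $2$, reduce $x_1\ge x_v$ to $x_2\ge x_0/\bigl((q-1)(q-2)-1\bigr)$, and settle it with $x_2>x_0/(q-2)^2$ and $(q-1)(q-2)-1-(q-2)^2=q-3>0$. I checked the algebra (the equivalence $Dx_2\ge x_0$ with $D=(q-1)(q-2)-1$, and the final inequality $q^2-9q+10\ge (q-2)/(q-1)$ for $q>9$) and it is sound; your version avoids the auxiliary graphs of Lemma \ref{Gv-subgraph} entirely and removes the case distinction on $g$, at no cost in hypotheses.
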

\begin{proof}
We first prove $q(G_1)>q(G_{v})$.
Let  $\mathbf{x}$ be the Perron vector of $Q(G_v)$.
The vertices $v$ and $v_1$ of  $G_v$ are shown in Fig.\ref{girth}.
By the eigenvalue  equation, we have
\begin{eqnarray}\label{G-v-eq}
 \begin{aligned}
q(G_v)x_{v_1}=&x_{v_1}+x_v, \  \
q(G_v)x_v=2x_v+x_{v_1}+x_0,\\
q(G_v)x_0=&(m-g+1)x_0+\frac{m-g-2}{q(G_v)-1}x_0+x_v+x_1+x_{g-1}.
\end{aligned}
\end{eqnarray}
Note that $x_1=x_{g-1}$ due to the symmetry of $G_v$. From (\ref{G-v-eq}), we have
$$\left\{\begin{aligned}
x_v=&\frac{q(G_v)-1}{(q(G_v)-2)(q(G_v)-1)-1}x_0,\\
x_1=&\frac{1}{2}(q(G_v)-(m-g+1)-\frac{m-g-2}{q(G_v)-1}-\frac{q(G_v)-1}{(q(G_v)-2)(q(G_v)-1)-1})x_0.
\end{aligned}\right.
$$
Let
$$f(x)=\frac{1}{2}(x-(m-g+1)-\frac{m-g-2}{x-1}-\frac{x-1}{(x-2)(x-1)-1})-\frac{x-1}{(x-2)(x-1)-1}.$$
Then $x_1-x_v=f(q(G_v))x_0.$
Notice that
\begin{eqnarray}\label{cha-function}
f(x)=\frac{x}{2(x^2-3x+1)(x-1)}\phi_1(x,m-g+5),
\end{eqnarray}
where $\phi_1(x,m-g+5)$ is defined by Lemma \ref{Gv-subgraph}(i).
If $g\ge5$,
  then $S_{m-g+5,3}$ is a proper subgraph of $G_v$, and so $q(G_v)>q(S_{m-g+5,3})$.
%Since $m\ge g+7$, we have $q(G_v)> m-g+2\ge 9$ by Lemma \ref{inG}.
Recall that $\Delta(G_v)=m-g+1\le m-1$, by Theorem \ref{outG-0}(ii)
 we have $q(G_v)>m-g+2\ge9$ due to $m\ge g+ 7$.
It is easy to versify that $x^2-3x+1>0$ for $x>9$.
Thus   $f(q(G_v))>0$, which implies that $x_1>x_v$.
If $g=4$,
 then (\ref{cha-function}) becomes
 $$f(x)=\frac{x}{2(x^2-3x+1)(x-1)}\cdot
 \frac{\phi_2(x,m)+2x-2}{x^2-4x+2},$$
where $\phi_2(x,m)$ is is defined by Lemma \ref{Gv-subgraph}(ii).
Clearly, $\phi_2(q(G_v),m)=0$.
On the other hand, we have $2x-2>0$ and $x^2-4x+2>0$ for $x>9$.
Recall that $q(G_v)>9$.
Thus   $f(q(G_v))>0$, and also $x_1>x_v$.
Let $G'=G_v-\{vv_1\}+\{1v_1\}$. Clearly, $G'\cong G_1$.
By Lemma \ref{lem-perron}, we have $q(G_v)<q(G')=q(G_1)$.

Next we will prove $q(G_{v})>q(G_2)$.
Let  $\mathbf{y}$ be the Perron vector of $Q(G_2)$.
From Lemma \ref{max-entry},
$y_0=\max_{u\in V(G_2)}\{y_u\}$.
Taking any vertex $u$ with degree $2$ in $G_2$,
by the eigenvalue equation, we have
$q(G_2)y_u=2y_u+\sum_{v\in N_{G_2}(u)} y_v\le 2y_u+2y_0,$
which implies that
\begin{eqnarray}\label{y-2-degree-bound}
y_u\le \frac{2}{q(G_2)-2}y_0.
\end{eqnarray}
By the eigenvalue  equation again,
$q(G_2)y_w=y_w+y_2$ and
$q(G_2)y_2=3y_2+y_w+y_1+y_3$.
Thus  from (\ref{y-2-degree-bound}) we have
$$y_2=\frac{ q(G_2)-1}{q^2(G_2)-4q(G_2)+2}(y_1+y_3)\le \frac{ q(G_2)-1}{q^2(G_2)-4q(G_2)+2}\cdot\frac{4}{q(G_2)-2}y_0.$$
Let $v$ be a pendent vertex attaching $0$ in $G_2$.  Clearly, $y_v=\frac{1}{q(G_2)-1}y_0$.
On the other hand,    one can easily verify that
$$\frac{ q(G_2)-1}{q^2(G_2)-4q(G_2)+2}\cdot\frac{4}{q(G_2)-2}<\frac{1}{q(G_2)-1}$$
for $q(G_2)\ge 8$.
Since $m\ge g +7$,  by Lemma \ref{G_i-bound}, we have $q(G_2)> m-g+2\ge 8$.
Hence, $y_2<y_v$.
Let $G''=G_2-\{2w\}+\{vw\}$.
 Clearly, $G''\cong G_v$.
By Lemma \ref{lem-perron}, we have $q(G_2)<q(G'')=q(G_v)$.

It completes the proof.
\end{proof}

It is time to provide the proofs of our main results.  First we prove Theorem \ref{order-girth} that orders the first  $(\lfloor\frac{g}{2}\rfloor+2)$ largest  graphs
according  their $Q$-indices among $\mathbb{G}(m, g)$.
\begin{proof}[\bf{Proof of Theorem \ref{order-girth}}]
Note that $G_0$ is a unique graph  with maximum degree $m-g+2$ among $\mathbb{G}(m, g)$
and  $m\ge 3g\ge 12$.
By Lemma \ref{degree-1}
$\mathbb{G}_{\Delta}(m, g)=\{G_1, G_2, \ldots, G_{\lfloor\frac{g}{2}\rfloor}, G_v\}$ is the set of graphs with maximum degree  $m-g+1$.
Since  $m-1\geq\Delta(G_0)=m-g+2>m-g+1=\Delta(G_1)\geq \frac{2m}{3}$,  by Corollary \ref{compare-max-degree}
we have $q(G_0)>q(G_1)$.
Moreover, notice that $g\geq 4$ and $m\geq \max\{2g-2, g+7\}$ due to $m\ge 3g\ge 12$,
by Lemmas \ref{Gi-compare} and \ref{three-compare}, we have
$q(G_0)>q(G_1)> q(G_v)> q(G_2)>(G_3)> \cdots > q(G_{\lfloor\frac{g}{2}\rfloor})$.
Set $\mathbb{G}_{m-g}(m, g)=\mathbb{G}(m, g)\setminus (\mathbb{G}_{\Delta}(m, g)\cup \{G_0\})$.
Then for any $G'\in \mathbb{G}_{m-g}(m, g)$,
we have $\Delta(G')\le m-g$.
Since $m\ge 3g$, we have $m-1\ge\Delta(G_{\lfloor\frac{g}{2}\rfloor}) =m-g+1>m-g=\Delta(G')\ge \frac{2m}{3}$.
By Corollary \ref{compare-max-degree}, we get
$q(G_{\lfloor\frac{g}{2}\rfloor})>q(G')$.
Thus the first $(\lfloor\frac{g}{2}\rfloor+2)$ largest $Q$-indices of all graphs among $\mathbb{G}(m, g)$
belong to $\mathbb{G}_{\Delta}(m, g)\cup \{G_0\}$, which is given by
\begin{eqnarray*}\label{one-degree}
q(G_0)>q(G_1)> q(G_v)> q(G_2)>(G_3)> \cdots > q(G_{\lfloor\frac{g}{2}\rfloor}).
\end{eqnarray*}
It completes the proof.
\end{proof}

 Next we prove Theorem \ref{order-girth-1} that orders the first $(\lfloor\frac{g}{2}\rfloor+3)$ largest graphs according their   $Q$-indices among $\mathbb{G}(m, \geq g)$.
For $0\le i\le \lfloor\frac{g}{2}\rfloor$,
we use $G_{i,g}$ and $G_{v,g}$ instead of $G_i$  and $G_v$ to distinguish the girth of the graphs in the following proofs.
\begin{figure}
  \centering
   \footnotesize
\unitlength 1mm % = 2.845pt
\linethickness{0.4pt}
\ifx\plotpoint\undefined\newsavebox{\plotpoint}\fi % GNUPLOT compatibility
\begin{picture}(135,51)(0,0)
\put(17,41){\circle*{2}}
\put(8,38){\circle*{2}}
\put(26,38){\circle*{2}}
\put(4,33){\circle*{2}}
\put(30,33){\circle*{2}}
%\emline(10,50)(17,41)
\multiput(10,50)(.033653846,-.043269231){208}{\line(0,-1){.043269231}}
%\end
\put(14,50){\line(1,-3){3}}
%\emline(22,50)(17,41)
\multiput(22,50)(-.033557047,-.060402685){149}{\line(0,-1){.060402685}}
%\end
\put(10,50){\circle*{2}}
\put(14,50){\circle*{2}}
\put(22,50){\circle*{2}}
\put(16,50){$\ldots$}
\put(16,37){$0$}
\put(4,38){$1$}
\put(0,32){$2$}
\put(28,38){$g$}
\put(32,32){$g-1$}
\put(28,12){$\lfloor\frac{g}{2}\rfloor\!+\!2$}
\put(13,7){$\lfloor\frac{g}{2}\rfloor\!+\!1$}
\put(2,12){$\lfloor\frac{g}{2}\rfloor$}
\put(7,0){$G_{0,g+1}$ for odd $g$}
\put(11,51){$\overbrace{ \qquad \ \ \ \ }^{m-g-1}$}
\put(68,41){\circle*{2}}
\put(59,38){\circle*{2}}
\put(77,38){\circle*{2}}
\put(55,33){\circle*{2}}
\put(81,33){\circle*{2}}
%\emline(61,50)(68,41)
\multiput(61,50)(.033653846,-.043269231){208}{\line(0,-1){.043269231}}
%\end
\put(65,50){\line(1,-3){3}}
%\emline(73,50)(68,41)
\multiput(73,50)(-.033557047,-.060402685){149}{\line(0,-1){.060402685}}
%\end
\put(61,50){\circle*{2}}
\put(65,50){\circle*{2}}
\put(73,50){\circle*{2}}
\put(67,50){$\ldots$}
\put(67,37){$0$}
\put(55,38){$1$}
\put(51,32){$2$}
\put(79,38){$g$}
\put(83,32){$g-1$}
\put(81,15){$\lfloor\frac{g}{2}\rfloor\!+\!2$}
\put(68,8){$\lfloor\frac{g}{2}\rfloor\!+\!1$}
\put(55,11){$\lfloor\frac{g}{2}\rfloor$}
\put(57,0){$G_{0,g+1}$ for even $g$}
\put(62,51){$\overbrace{ \qquad \ \ \ \ }^{m-g-1 }$}
\put(17,11){\circle*{2}}
\put(8,14){\circle*{2}}
\put(26,14){\circle*{2}}
\put(62,12){\circle*{2}}
\put(73,12){\circle*{2}}
\put(120,41){\circle*{2}}
\put(111,38){\circle*{2}}
\put(129,38){\circle*{2}}
\put(107,33){\circle*{2}}
\put(133,33){\circle*{2}}
%\emline(113,50)(120,41)
\multiput(113,50)(.033653846,-.043269231){208}{\line(0,-1){.043269231}}
%\end
\put(117,50){\line(1,-3){3}}
%\emline(125,50)(120,41)
\multiput(125,50)(-.033557047,-.060402685){149}{\line(0,-1){.060402685}}
%\end
\put(113,50){\circle*{2}}
\put(117,50){\circle*{2}}
\put(125,50){\circle*{2}}
\put(119,50){$\ldots$}
\put(119,37){$0$}
\put(107,38){$1$}
\put(103,32){$2$}
\put(131,38){$g$}
\put(135,32){$g-1$}
\put(133,15){$\lfloor\frac{g}{2}\rfloor\!+\!2$}
\put(120,8){$\lfloor\frac{g}{2}\rfloor\!+\!1$}
\put(107,11){$\lfloor\frac{g}{2}\rfloor$}
\put(117,0){$G'$}
%\put(127,11){\XSolid}
\put(127,11.5){\XSolidBold}
%\put(127,12){$\backslash$}
%\put(127,12){$\smallsetminus$}
%\put(128,13){$\smallsetminus$}
%\put(129,14){$\smallsetminus$}
\put(114,51){$\overbrace{ \qquad \ \ \ \ }^{m-g-1 }$}
\put(114,12){\circle*{2}}
\put(125,12){\circle*{2}}
\put(80,17){\circle*{2}}
\put(132,17){\circle*{2}}
%\emline(114,12)(132,17)
\multiput(114,12)(.120805369,.033557047){149}{\line(1,0){.120805369}}
%\end
\put(17,26){\circle{30}}
\put(68,26){\circle{30}}
\put(120,26){\circle{30}}

\end{picture}

  \caption{ \footnotesize $G_{0,g+1}$ and $G'$ used in the Proof of Theorem \ref{order-girth-1},
 where the edge with ``\XSolidBold'' represents it is deleted.
  }\label{g+1}
\end{figure}
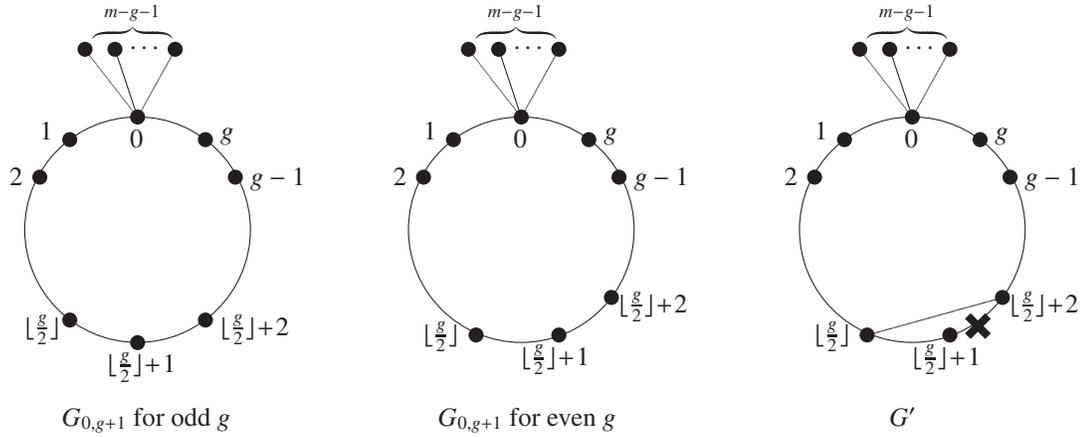

\begin{proof}[\bf{Proof of Theorem \ref{order-girth-1}}]
Denote by $\overline{\mathbb{G}}$ and $\underline{\mathbb{G}}$ respectively the sets of all graphs in $\mathbb{G}(m, \geq g)$  with maximum degree no less than $m-g+1$ and no more than $m-g$.
Then we have a partition  $\mathbb{G}(m, \geq g)=
\overline{\mathbb{G}}
\cup \underline{\mathbb{G}}$. According to the proof of Lemma \ref{degree-1}, it is easy to see that
$\overline{\mathbb{G}}
=\{G_{0,g},G_{1,g},G_{v,g},G_{2,g},\cdots, G_{\lfloor\frac{g}{2}\rfloor, g},
G_{0,g+1}\}$,
in which the maximum degree of
$G_{1,g},G_{v,g},G_{2,g},\cdots, G_{\lfloor\frac{g}{2}\rfloor,g},
G_{0,g+1}$
equal to $m-g+1$ and $\Delta(G_{0,g})=m-g+2$.
For any $G\in \overline{\mathbb{G}}$,  notice that $m-g+1\leq\Delta(G)\leq m-1$.
By Theorem \ref{outG-0}(ii), we have $q(G)>m-g+2$.
For any $G'\in \underline{\mathbb{G}}$,  notice that
$\Delta(G')\leq m-g$
and $m-g\ge \frac{2m}{3}$ (since $m\geq 3g$),
from Theorem \ref{outG-0}(i)  we have $q(G')\leq m-g+2$ .
Thus each $Q$-index of the graph in $\overline{\mathbb{G}}$ is  more than that of the graph in $\underline{\mathbb{G}}$.
By Theorem \ref{order-girth},
 to complete the proof
it remains to show $q(G_{\lfloor\frac{g}{2}\rfloor,g})>q(G_{0,g+1})$.

Let $\mathbf{x}$ be the Perron vector of $G_{0,g+1}$ corresponding to $q=q(G_{0,g+1})$.
Notice that $m\geq 3g$ and $g\geq 3$. We have $q>m-g+2\geq 8$.
If $g$ is odd,
by symmetry of $G_{0,g+1}$, then
$x_{\lfloor\frac{g}{2}\rfloor}=x_{\lfloor\frac{g}{2}\rfloor+2}$ %by the symmetric of $\tilde{G}_0$
 (see Fig. \ref{g+1}).
Moreover, by the eigenvalue equation of $Q(G_{0,g+1})$, we have
$(q-2)x_{\lfloor\frac{g}{2}\rfloor+1}=
x_{\lfloor\frac{g}{2}\rfloor}+x_{\lfloor\frac{g}{2}\rfloor+2}
=2x_{\lfloor\frac{g}{2}\rfloor}$, and so
$x_{\lfloor\frac{g}{2}\rfloor}\ge x_{\lfloor\frac{g}{2}\rfloor+1}$  since  $q> 8$.
If $g$ is even,
by the symmetry of $G_{0,g+1}$, then
$x_{\lfloor\frac{g}{2}\rfloor}=x_{\lfloor\frac{g}{2}\rfloor+1}$ (see Fig. \ref{g+1}).
Let $$G'=G_{0,g+1}-
\{(\lfloor\frac{g}{2}\rfloor+1)(\lfloor\frac{g}{2}\rfloor+2)\}
+\{\lfloor\frac{g}{2}\rfloor(\lfloor\frac{g}{2}\rfloor+2)\}.$$
Clearly, $G'\cong G_{\lfloor\frac{g}{2}\rfloor,g}$.
By Lemma \ref{lem-perron}, we have
$q(G_{\lfloor\frac{g}{2}\rfloor,g})=q(G')>q(G_{0,g+1})$.

It completes the proof.
\end{proof}

Let $\mathbb{G}_{\Delta}(m, 3)$ be the set of graphs with maximum degree $m-2$ in $\mathbb{G}(m, 3)$. $B_1, B_2, G_{1,3}$ and $G_{v,3}$
are shown in Fig.\ref{graphsg=3}.
By direct observation, we have
$\mathbb{G}_{\Delta}(m, 3)=\{B_1, B_2, G_{1,3}, G_{v,3}\}$.
Thirdly we give the order of $Q$-indices among $\mathbb{G}(m, 3)$ as supplement of Theorem \ref{order-girth} for $g=3$.

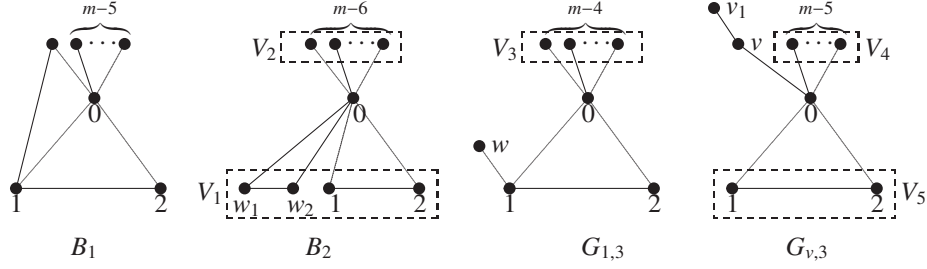
\begin{figure}[h]
  \centering
  \footnotesize
\unitlength 0.8mm % = 2.845pt
\linethickness{0.4pt}
\ifx\plotpoint\undefined\newsavebox{\plotpoint}\fi % GNUPLOT compatibility
\begin{picture}(148,42)(0,0)
\put(57,26){\circle*{2}}
%\emline(50,35)(57,26)
\multiput(50,35)(.033653846,-.043269231){208}{\line(0,-1){.043269231}}
%\end
\put(54,35){\line(1,-3){3}}
%\emline(62,35)(57,26)
\multiput(62,35)(-.033557047,-.060402685){149}{\line(0,-1){.060402685}}
%\end
\put(50,35){\circle*{2}}
\put(54,35){\circle*{2}}
\put(62,35){\circle*{2}}
\put(56,35){$\ldots$}
\put(57,22){$0$}
\put(50,36){$\overbrace{ \qquad \ \ \ \ }^{m-6}$}
%\emline(57,26)(68,11)
\multiput(57,26)(.0336391437,-.0458715596){327}{\line(0,-1){.0458715596}}
%\end
\put(68,11){\circle*{2}}
\put(67,7){$2$}
%\emline(57,26)(53,11)
\multiput(57,26)(-.033613445,-.12605042){119}{\line(0,-1){.12605042}}
%\end
\put(53,11){\circle*{2}}
\put(53,11){\line(1,0){15}}
\put(57,26){\line(-2,-3){10}}
\put(57,26){\line(-6,-5){18}}
\put(47,11){\circle*{2}}
\put(39,11){\circle*{2}}
\put(39,11){\line(1,0){8}}
\put(53,7){$1$}
\put(37,7){$w_1$}
\put(46,7){$w_2$}
\put(31,9){$V_1$}
\put(40,33){$V_2$}
\put(49,0){$B_2$}
\put(36,6){\dashbox{1}(35,8)[cc]{}}
\put(45,32){\dashbox{1}(20,5)[cc]{}}
\put(96,26){\circle*{2}}
%\emline(89,35)(96,26)
\multiput(89,35)(.033653846,-.043269231){208}{\line(0,-1){.043269231}}
%\end
\put(93,35){\line(1,-3){3}}
%\emline(101,35)(96,26)
\multiput(101,35)(-.033557047,-.060402685){149}{\line(0,-1){.060402685}}
%\end
\put(89,35){\circle*{2}}
\put(93,35){\circle*{2}}
\put(101,35){\circle*{2}}
\put(95,35){$\ldots$}
\put(95,22){$0$}
\put(89,36){$\overbrace{ \qquad \ \ \ \ }^{m-4}$}
%\emline(96,26)(83,11)
\multiput(96,26)(-.0336787565,-.0388601036){386}{\line(0,-1){.0388601036}}
%\end
\put(83,11){\circle*{2}}
%\emline(96,26)(107,11)
\multiput(96,26)(.0336391437,-.0458715596){327}{\line(0,-1){.0458715596}}
%\end
\put(107,11){\circle*{2}}
\put(82,11){\line(1,0){25}}
\put(82,7){$1$}
\put(106,7){$2$}
\put(80,17){$w$}
\put(80,33){$V_3$}
\put(95,0){$G_{1,3}$}
%\emline(78,18)(83,11)
\multiput(78,18)(.033557047,-.046979866){149}{\line(0,-1){.046979866}}
%\end
\put(78,18){\circle*{2}}
\put(85,32){\dashbox{1}(20,5)[cc]{}}
\put(133,26){\circle*{2}}
\put(130,35){\line(1,-3){3}}
%\emline(138,35)(133,26)
\multiput(138,35)(-.033557047,-.060402685){149}{\line(0,-1){.060402685}}
%\end
\put(130,35){\circle*{2}}
\put(138,35){\circle*{2}}
\put(132,35){$\ldots$}
\put(132,22){$0$}
\put(129,36){$\overbrace{ \qquad \  }^{m-5}$}
%\emline(133,26)(120,11)
\multiput(133,26)(-.0336787565,-.0388601036){386}{\line(0,-1){.0388601036}}
%\end
\put(120,11){\circle*{2}}
%\emline(133,26)(144,11)
\multiput(133,26)(.0336391437,-.0458715596){327}{\line(0,-1){.0458715596}}
%\end
\put(144,11){\circle*{2}}
\put(119,11){\line(1,0){25}}
\put(119,7){$1$}
\put(143,7){$2$}
\put(148,9){$V_5$}
\put(142,33){$V_4$}
\put(123,34){$v$}
\put(119,40){$v_1$}
\put(129,0){$G_{v,3}$}
\put(117,6){\dashbox{1}(30,8)[cc]{}}
\put(121,35){\line(4,-3){12}}
\put(121,35){\circle*{2}}
\put(117,41){\line(2,-3){4}}
\put(117,41){\circle*{2}}
\put(14,26){\circle*{2}}
%\emline(7,35)(14,26)
\multiput(7,35)(.033653846,-.043269231){208}{\line(0,-1){.043269231}}
%\end
\put(11,35){\line(1,-3){3}}
%\emline(19,35)(14,26)
\multiput(19,35)(-.033557047,-.060402685){149}{\line(0,-1){.060402685}}
%\end
\put(7,35){\circle*{2}}
\put(11,35){\circle*{2}}
\put(19,35){\circle*{2}}
\put(13,35){$\ldots$}
\put(13,22){$0$}
\put(10,36){$\overbrace{ \qquad  }^{m-5}$}
%\emline(14,26)(1,11)
\multiput(14,26)(-.0336787565,-.0388601036){386}{\line(0,-1){.0388601036}}
%\end
\put(1,11){\circle*{2}}
%\emline(14,26)(25,11)
\multiput(14,26)(.0336391437,-.0458715596){327}{\line(0,-1){.0458715596}}
%\end
\put(25,11){\circle*{2}}
\put(0,11){\line(1,0){25}}
\put(0,7){$1$}
\put(24,7){$2$}
\put(10,0){$B_1$}
\put(7,35){\line(-1,-4){6}}
\put(127,32){\dashbox{1}(14,5)[cc]{}}
\end{picture}
  \caption{\footnotesize The graphs $B_1$, $B_2$, $G_{1,3}$ and $G_{v,3}$}\label{graphsg=3}
\end{figure}

\begin{proof}[\bf{Proof of Theorem \ref{order-g=3}}]
By Corollary \ref{max-graph-girth},
$G_{0, 3}$  (see Fig.\ref{girth}) attains the maximum $Q$-index  among all graphs in $\mathbb{G}(m, 3)$ for $m\geq 6$.
 Note that $m-1=\Delta(G_{0,3})>\Delta(B_1)=m-2\ge\frac{2m}{3}$.
 By Corollary \ref{compare-max-degree},
we have $q(G_{0,3})>q(B_1)$.

We now prove $q(B_1)>q(B_2)$.
Let $\mathbf{x}$ be the Perron vector of $B_2$ and vertices $w_1$, $w_2$ and $1$ of $B_2$ are shown in Fig.\ref{graphsg=3}.
By the symmetry of graph $Q(B_2)$, we have $x_1=x_{w_1}$.
Let $B'=B_2-\{w_1w_2\}+\{1w_2\}$.
Clearly, $B'\cong B_1$.
By Lemma \ref{lem-perron}, we have $q(B_1)=q(B')>q(B_2)$.

Secondly, we prove $q(B_2)>q(G_{1,3})$.
It is clear that
$B_2$ and $G_{1,3}$ have the equitable partitions $\Pi_1:
V(B_2)=V_1\cup \{0\}\cup V_2$ and $\Pi_2: V(G_{1,3})=\{0\}\cup \{1\}\cup\{2\}\cup\{w\}\cup\{V_3\}$ (see Fig.\ref{graphsg=3}), respectively. Thus the corresponding quotient  matrices are
$$M(B_2)=\begin{pmatrix}
3 & 1&0\\
4& m-2& m-6\\
0 &1&1
\end{pmatrix}, \ \
M(G_{1,3})=\begin{pmatrix}
m-2&1& 1& 0&m-4\\
1&3&1&1&0\\
1&1&2&0&0\\
0&1&0&1&0\\
1&0&0&0&1
\end{pmatrix}.$$
By calculation, the characteristic polynomials of $M(B_2)$ and $M(G_{1,3})$ are
$\varphi(x,B_2)=x^3-(m+2)x^2+(3m-3)x-8$ and $\varphi(x, G_{1,3})=x^5-(m+5)x^4+(6m+3)x^3-(9m-1)x^2+(3m+8)x-4$.
It is easy to verify that
\begin{eqnarray}\label{G1-B2}
\varphi(x, G_{1,3})=\varphi(x,B_2)\cdot (x^2-3x)+(3m-16)x-4.
\end{eqnarray}
Clearly, $\varphi(q(G_{1,3}),G_{1,3})=0$.
Note that $\Delta(G_{1,3})=m-2$.  By Theorem \ref{outG-0}(ii), we have
$q(G_{1,3})>m-1\ge 5$ due to $m\ge6$. Thus $q^2(G_{1,3})-3q(G_{1,3})>0$ and
 $(3m-16)q(G_{1,3})-4>0$.
From (\ref{G1-B2}), we have $\varphi(q(G_{1,3}),B_2)<0$.
Note that
$q(B_2)$ is the largest root of $\varphi(x,B_2)$.
Thus $q(B_2)>q({G_{1,3}})$.

Thirdly, we prove $q({G_{1,3}})>q({G_{v,3}})$.
{$G_{v,3}$} has the equitable partition $\Pi_3: V({G_{v,3}})=\{0\}\cup\{v\}\cup \{v_1\} \cup \{V_4\}\cup \{V_5\} $  (see Fig.\ref{graphsg=3}). Thus the quotient matrix with respect to  $\Pi_3$ is
$$M({G_{v,3}})=\begin{pmatrix}
m-2&1& 0&m-5 &2\\
1&2&1&0&0\\
0&1&1&0&0\\
1&0&0&1&0\\
1&0&0&0&3
\end{pmatrix}.$$
By calculation, the characteristic polynomial of $M({G_{v,3}})$ is
$\varphi(x,{G_{v,3}})
=x^5-(m+5)x^4+(6m+4)x^3-(10m-2)x^2+(3m+12)x-4$.
Thus
\begin{eqnarray}\label{G1-Gv}
\varphi(x, {G_{v,3}})=\varphi(x, {G_{1,3}}) +x(x^2-(m-1)x+4).
\end{eqnarray}
Recall that $\Delta({G_{v,3}})=m-2$. By Theorem \ref{outG-0}(ii), we have
$q({G_{v,3}})>m-1$. Thus $q^2({G_{v,3}})-(m-1)q({G_{v,3}})+4>0$.
Clearly, $\varphi(q({G_{v,3}}), {G_{v,3}})=0$. From (\ref{G1-Gv}), we get $\varphi(q({G_{v,3}}), {G_{1,3}})<0$.
Note that $q({G_{1,3}})$ is the largest root of $\varphi(x,  {G_{1,3}})$. Thus $q({G_{v,3}})<q({G_{1,3}})$.
 For any $G\in \mathbb{G}(m, 3) \setminus \{\mathbb{G}_{\Delta}(m, 3)\cup G_{0,3}\}$, we have
$\Delta(G)\leq m-3$.
Notice that $m-2=\Delta({G_{v,3}})> m-3 \geq\frac{2m}{3}$ due to $m\geq 9$, by Corollary
\ref{compare-max-degree} we have $q{G_{v,3}})>q(G)$.
It completes the proof.
\end{proof}

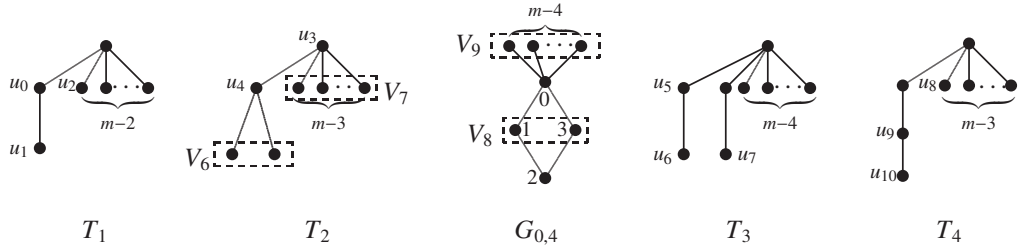
\begin{figure}
  \centering
  \footnotesize
\unitlength 0.8mm % = 2.845pt
\linethickness{0.65pt}
\ifx\plotpoint\undefined\newsavebox{\plotpoint}\fi % GNUPLOT compatibility
\begin{picture}(163.414,34)(0,0)
\put(45,0){$T_2$}
\put(80,0){$G_{0,4}$}
\put(115,0){$T_{3}$}
\put(150,0){$T_{4}$}
\put(8,0){$T_1$}
\put(12,32){\circle*{2}}
%\emline(12,32)(8,25)
\multiput(12,32)(-.033613445,-.058823529){119}{\line(0,-1){.058823529}}
%\end
\put(8,25){\circle*{2}}
\put(12,32){\line(0,-1){7}}
\put(12,25){\circle*{2}}
\put(12,32){\line(1,-1){7}}
\put(19,25){\circle*{2}}
%\emline(12,32)(1,25)
\multiput(12,32)(-.052884615,-.033653846){208}{\line(-1,0){.052884615}}
%\end
\put(1,25){\circle*{2}}
\put(1,25){\line(0,-1){10}}
\put(1,15){\circle*{2}}
\put(48,32){\circle*{2}}
%\emline(48,32)(44,25)
\multiput(48,32)(-.033613445,-.058823529){119}{\line(0,-1){.058823529}}
%\end
\put(44,25){\circle*{2}}
\put(48,32){\line(0,-1){7}}
\put(48,25){\circle*{2}}
\put(48,32){\line(1,-1){7}}
\put(55,25){\circle*{2}}
%\emline(48,32)(37,25)
\multiput(48,32)(-.052884615,-.033653846){208}{\line(-1,0){.052884615}}
%\end
\put(37,25){\circle*{2}}
%\emline(37,25)(40,14)
\multiput(37,25)(.03370787,-.12359551){89}{\line(0,-1){.12359551}}
%\end
\put(40,14){\circle*{2}}
%\emline(37,25)(33,14)
\multiput(37,25)(-.033613445,-.092436975){119}{\line(0,-1){.092436975}}
%\end
\put(33,14){\circle*{2}}
\put(30,12){\dashbox{1}(13,4)[cc]{}}
\put(42,23){\dashbox{1}(15,4)[cc]{}}
\put(85,26){\circle*{2}}
\put(84,22){\scriptsize$0$}
\put(87,17){\scriptsize$3$}
\put(81,17){\scriptsize$1$}
\put(82,9){\scriptsize$2$}
\put(85,32){$\ldots$}
\put(79,33){$\overbrace{ \qquad  \ \ \ }^{m-4}$}
\put(70,31){$V_9$}
\put(72,16){$V_8$}
%\emline(85,26)(90,18)
\multiput(85,26)(.033557047,-.053691275){149}{\line(0,-1){.053691275}}
%\end
%\emline(85,26)(80,18)
\multiput(85,26)(-.033557047,-.053691275){149}{\line(0,-1){.053691275}}
%\end
%\emline(80,18)(85,10)
\multiput(80,18)(.033557047,-.053691275){149}{\line(0,-1){.053691275}}
%\end
%\emline(90,18)(85,10)
\multiput(90,18)(-.033557047,-.053691275){149}{\line(0,-1){.053691275}}
%\end
\put(80,18){\circle*{2}}
\put(90,18){\circle*{2}}
\put(85,10){\circle*{2}}

\put(79,32){\line(1,-1){6}}
\put(83,32){\line(1,-3){2}}
\put(91,32){\line(-1,-1){6}}
\put(79,32){\circle*{2}}
\put(83,32){\circle*{2}}
\put(91,32){\circle*{2}}
\put(76,30){\dashbox{1}(18,4)[cc]{}}
\put(78,16){\dashbox{1}(14,4)[cc]{}}
\put(122,32){\circle*{2}}
%\emline(122,32)(118,25)
\multiput(122,32)(-.033613445,-.058823529){119}{\line(0,-1){.058823529}}
%\end
\put(118,25){\circle*{2}}
\put(122,32){\line(0,-1){7}}
\put(122,25){\circle*{2}}
\put(122,32){\line(1,-1){7}}
\put(129,25){\circle*{2}}
\put(122,32){\line(-1,-1){7}}
\put(122,32){\line(-2,-1){14}}
\put(115,25){\circle*{2}}
\put(108,25){\circle*{2}}
\put(115,25){\line(0,-1){11}}
\put(115,14){\circle*{2}}
\put(108,25){\line(0,-1){11}}
\put(108,14){\circle*{2}}
\put(155.414,32.414){\circle*{2}}
%\emline(155.414,32.414)(151.414,25.414)
\multiput(155.414,32.414)(-.033613445,-.058823529){119}{\line(0,-1){.058823529}}
%\end
\put(151.414,25.414){\circle*{2}}
\put(155.414,32.414){\line(0,-1){7}}
\put(155.414,25.414){\circle*{2}}
\put(155.414,32.414){\line(1,-1){7}}
\put(162.414,25.414){\circle*{2}}
%\emline(155.414,32.414)(144.414,25.414)
\multiput(155.414,32.414)(-.052884615,-.033653846){208}{\line(-1,0){.052884615}}
%\end
\put(144.414,25.414){\circle*{2}}
\put(144.414,25.414){\line(0,-1){8}}
\put(144.414,17.414){\circle*{2}}
\put(144.414,17.414){\line(0,-1){7}}
\put(144.414,10.414){\circle*{2}}
\put(13,25){$\ldots$}
\put(8,24){$\underbrace{ \qquad  \ \ \ }_{m-2}$}
\put(4,25){\scriptsize$u_2$}
\put(-4,25){\scriptsize$u_0$}
\put(-4,14){\scriptsize$u_1$}
\put(50,25){$\ldots$}
\put(43,24){$\underbrace{ \qquad  \ \ \ }_{m-3}$}
\put(44,33){\scriptsize$u_3$}
\put(32,25){\scriptsize$u_4$}
\put(25,12){$V_6$}
\put(58,23){$V_7$}
\put(123,25){$\ldots$}
\put(118,24){$\underbrace{ \qquad  \ \ \ }_{m-4}$}
\put(103,25){\scriptsize$u_5$}
\put(103,13){\scriptsize$u_6$}
\put(117,13){\scriptsize$u_7$}
\put(157,25){$\ldots$}
\put(151,24){$\underbrace{ \qquad  \ \ \ }_{m-3}$}
\put(147,25){\scriptsize$u_8$}
\put(140,17){\scriptsize$u_9$}
\put(139,10){\scriptsize$u_{10}$}
\end{picture}

  \caption{\footnotesize Some graphs used in the  Proof of Theorem \ref{order-m}}\label{some-graph}
\end{figure}

Notice that $\mathbb{G}_m$ is the union of $\cup_{g\geq3}\mathbb{G}(m, g)$ and some trees. The ordering of $Q$-indices of graphs in $\mathbb{G}(m, g)$
and $\mathbb{G}(m, \geq g)$ would induce related result among graphs in $\mathbb{G}_m$. At last we prove Theorem \ref{order-m} that orders of the first eleven
largest $Q$-indices among $\mathbb{G}_m$.

\begin{proof}[\bf{Proof of Theorem \ref{order-m}}]
Denote by $\mathbb{G}_{\Delta}(m)$ and $\mathbb{G}_{\leq \Delta}(m)$  the set of  graphs in $\mathbb{G}_m$  with maximum degree $\Delta$ and no more than
$\Delta$, respectively.
Clearly, $\mathbb{G}_{m}(m)=\{K_{1,m}\}$,  $\mathbb{G}_{m-1}(m)=\{G_{0,3},T_1\}$ and
$\mathbb{G}_{m-2}(m)=\{B_1, B_2, G_{1, 3}, G_{v, 3}, T_{2}, G_{0, 4}, T_{3}, T_{4}\}$.
Moreover,
 we can partition
 $$\mathbb{G}_m=\mathbb{G}_m(m)\cup \mathbb{G}_{m-1}(m)\cup \mathbb{G}_{m-2}(m)\cup \mathbb{G}_{\leq m-3}(m).$$

Note that $\Delta(G_{0,3})=m-1\geq \frac{2m}{3}$ since $m\geq 9$.
By Theorem\ref{outG-0} (i), we have
$q(G_{0,3})\leq m+1$.
If  $q(G_{0,3})=m+1$, then by the eigenvalue equation of $Q(G_{0,3})$,   we  deduce that $m=-2$ or $3$,  which contradicts  $m\geq 9$.
Thus
$q(G_{0,3})< m+1=q(K_{1,m})$.

Secondly, we will show $q(G_{0,3})>q(T_1)$.
Let $\mathbf{x}$ be the Perron vector of $T_1$.
%The label of vertices $u_0$, $u_1$ and $u_2$ of $T_1$ are shown as Fig. \ref{some-graph}.
By the eigenvalue equation of $Q(T_1)$, we have $x_{u_2}=\frac{q^2(T_1)-3q(T_1)+1}{q(T_1)-1}x_{u_1}$.
Notice that $\Delta(T_1)= m-1$. By Lemma \ref{outG-0}(ii),
 we have $q(T_1)> q(K_{1,m-1})=m\ge 9$. Thus $\frac{q^2(T_1)-3q(T_1)+1}{q(T_1)-1}>1$ and so $x_{u_2}>x_{u_1}$.
 Let $G'=T_1-u_0u_1+u_0u_2$.
 Clearly, $G'\setminus\{u_1\}\cong G_{0,3}$.
 By Lemma \ref{lem-perron}, we have $q(G_{0,3})=q(G')>q(T_1)$.

Again notice that $m-1=\Delta(T_{1})>\Delta(B_1)=m-2\geq\frac{2m}{3}$ since $m\geq 9$, we have
$q(T_{1})>q(B_1)$ by Corollary \ref{compare-max-degree}.
Thus,  from Theorem \ref{order-g=3} we obtain
$$q(K_{1,m})>q(G_{0,3})>q(T_1)>q(B_1)>q(B_2)>q(G_{1,3})>q(G_{v,3}).$$

Thirdly, we will show $q(G_{v,3})>q(T_2)>q(G_{0, 4})$  by comparing the largest root of their quotient matrices of two graphs.
As shown in Fig. \ref{some-graph},
$T_2$ and  $G_{0,4}$ respectively have the equitable partition $\Pi_4:  V(T_2)=\{u_3\}\cup \{u_4\}\cup V_6\cup V_7$ and  $\Pi_5: V(G_{0,4})=\{0\}\cup V_8\cup\{2\}\cup V_9$.
 The quotient matrices with respect to $\Pi_4$ and  $\Pi_5$ are respectively given by
$$M(T_2)=\begin{pmatrix}
m-2& 1& 0& m-3\\
1& 3 &2& 0\\
0& 1 &1 &0\\
1 &0& 0& 1
\end{pmatrix} \ \ \mbox{and} \ \
M(G_{0,4})=\begin{pmatrix}
m-2& 2& 0& m-4\\
1& 2 &1& 0\\
0& 2 &2 &0\\
1 &0& 0& 1
\end{pmatrix}.
$$
By calculation, the characteristic polynomials of $M(T_2)$ and $M(G_{0, 4})$  are respectively
$\varphi(x,T_2)= x^4 - ( m +3)x^3 + (4m - 3)x^2 - ( m + 1)x$ and
$\varphi(x, G_{0, 4})= x^4 - ( m + 3)x^3 + (4m - 2)x^2 - 2mx$.
Combining (\ref{G1-Gv}),
we obtain that
\begin{eqnarray}\label{T2-Gv3}
\varphi(x,G_{v,3})=(x-2)\cdot\varphi(x,T_2)+\frac{\varphi(x,T_2)}{x}+h(x),
\end{eqnarray}
where $h(x)=(13 - 3m)x + m - 3$.
Since $m\ge9$, we have $h(x)$ is decrease.
Note that $\Delta(T_2)= m-2\le m-1$, by Lemma \ref{outG-0}, we have $q(T_2)>q(K_{1,m-2})= m-1$ and then
$h(q(T_2))<h(m-1)=- 3m^2 + 17m - 16<0$ due to $m\ge9$.
From (\ref{T2-Gv3}), we get $\varphi(q(T_2),G_{v,3})<0$.
Note that $q(G_{v,3})$ is the largest root of $\varphi(x,G_{v,3})$.  Thus $q(G_{v,3})>q(T_2)$.
On the other hand,
one also can  verify  that
\begin{eqnarray}\label{eq-10}\varphi(x,G_{0, 4})=\varphi(x,T_2)+x(x + (1 - m)).\end{eqnarray}
Recall that  $q(T_2)>m-1$,
we have $\varphi(q(T_2),G_{0,4})>0$.
Since $q(G_{0,4})$ is the largest root of $\varphi(x,G_{0,4})$, we have $q(T_2)>q(G_{0,4})$ and thus $q(G_{v,3})>q(T_2)>q(G_{0,4})$.

Fourthly,  we show $q(G_{0,4})>q(T_3)$.
%The label of vertices $u_5$, $u_6$ and $u_7$ of $T_3$ is shown as Fig. \ref{some-graph}.
Let $\mathbf{y}$ be the Perron vector of $Q(T_3)$.
One can see $y_{u_6}=y_{u_7}$ from the symmetry of $T_3$ (see Fig.\ref{some-graph}).
 Let $G''=T_3-u_5u_6+u_5u_7$.
 Clearly, $G''\setminus\{u_6\}\cong G_{0,4}$.
 By Lemma \ref{lem-perron}, we have $q(G_{0,4})=q(G'')>q(T_3)$.

Now we will show $q(T_{3})>q(T_4)$.
 Let $\mathbf{z}$ be the Perron vector of $Q(T_4)$.
One can verify $z_{u_8}=\frac{q^3(T_4) - 5q^2(T_4) + 6q(T_4) - 1}{(q(T_4)-1)^2}z_{u_9}$ from the eigenvalue equation of $Q(T_4)$.
%As similar as $q(T_2)> m-1$,
 Notice that  $q(T_4)>q(K_{1, m-2})=m-1\ge 8$ due to $m\ge9$. We have  $\frac{q^3(T_4) - 5q^2(T_4) + 6q(T_4) - 1}{(q(T_4)-1)^2}>1$ and so $z_{u_8}>z_{u_9}$.
Let $G'''=T_4-u_{10}u_9+u_{10}u_8$.
 Clearly, $G'''\cong T_{3}$.
 By Lemma \ref{lem-perron}, we have $q(T_{3})=q(G''')>q(T_4)$.

For any $G\in \mathbb{G}_{\leq m-3}(m)$, we have $\Delta(T_4)=m-2>m-3\geq \Delta(G)\geq \frac{2m}{3}$ since $m\geq 9$.
By Corollary \ref{compare-max-degree}, we get $q(T_{4})>q(G)$.
Therefore,  by the above discussion, the first eleven largest $Q$-indices of graphs in $\mathbb{G}_m$ are given by
\begin{eqnarray*}q(K_{1,m})&>&q(G_{0,3})>q(T_1)>q(B_1)>q(B_2)>q(G_{1,3})>q(G_{v,3})
>q(T_2)>q(G_{0,4})\\&>&q(T_3)>q(T_4).\end{eqnarray*}
It completes the proof.
\end{proof}

%%%%%%%%%%%%%%%%%%%%%%%%%%%%%%%%%%%%%%%%%%%%%%%%%%

\end{document}